\documentclass[11pt,reqno]{amsart}
\usepackage[left=1in,right=1in,top=1in,bottom=1in]{geometry}
\usepackage{enumitem}
\usepackage{amssymb,mathtools}
\usepackage{graphicx}
\usepackage{xcolor}
\usepackage{float}

\usepackage{tikz}
\usetikzlibrary{arrows.meta,decorations.markings,matrix,trees}

\usepackage{hyperref}
\hypersetup{
  colorlinks=true,
  allcolors=blue
}

\def\XXint#1#2#3{{\setbox0=\hbox{$#1{#2#3}{\int}$}
  \vcenter{\hbox{$#2#3$}}\kern-.5\wd0}}

\newcommand{\al}{\alpha}       
    
\newcommand{\lda}{\lambda}
\newcommand{\om}{\Omega}            
\newcommand{\pa}{\partial}
       
\newcommand{\ud}{\mathrm{d}}
\newcommand{\be}{\begin{equation}} 
\newcommand{\ee}{\end{equation}}
\newcommand{\w}{\omega}

\newcommand{\A}{\mathbf{A}}

\newcommand{\B}{\mathbf{B}}

\newcommand{\cE}{\mathcal{E}}

\newcommand{\cF}{\mathcal{F}}

\newcommand{\bI}{\mathbf{I}}

\newcommand{\Z}{\mathbb{Z}}

\newcommand{\MM}{\mathbb{M}}

\newcommand{\m}{\mathbf{m}}

\newcommand{\n}{\mathbf{n}}

\newcommand{\PP}{\mathbf{P}}

\newcommand{\Q}{\mathbf{Q}}  

\newcommand{\R}{\mathbb{R}}

\newcommand{\Ss}{\mathbb{S}}

\newcommand{\vp}{\varphi}

\newcommand{\T}{\mathrm{T}}
\newcommand{\ga}{\gamma}
\newcommand{\Ga}{\Gamma}

\newcommand{\ift}{\infty} 
\newcommand{\wt}{\widetilde}

\newcommand{\f}{\frac}

\newcommand{\ol}{\overline}

\newcommand{\op}{\operatorname}

\newcommand{\na}{\nabla}

\DeclareMathOperator{\dist}{dist}

\DeclareMathOperator{\tr}{tr}

\DeclareMathOperator{\loc}{loc}

\def\<{\langle}\def\>{\rangle}
\def\({\left(}\def\){\right)}
\def\[{\left[}\def\]{\right]}
\numberwithin{equation}{section}
\theoremstyle{plain}
\newtheorem{thm}{Theorem}[section]

\newtheorem{lem}[thm]{Lemma}
\newtheorem{prop}[thm]{Proposition}

\newtheorem{claim}[thm]{Claim}

\theoremstyle{definition}
\newtheorem{defn}[thm]{Definition}

\theoremstyle{remark}
\newtheorem{rem}[thm]{Remark}

\title[Convergence of Landau-de Gennes minimizers in the Lyuksyutov Regime]{Smooth convergence of Landau-de Gennes minimizers \\ in the Lyuksyutov Regime}

\date{}

\author{Haotong Fu}
\address{School of Mathematical Sciences, Peking University, Beijing 100871, China}
\email{547434974@qq.com}

\author{Huaijie Wang}
\address{School of Mathematical Sciences, Peking University, Beijing 100871, China}
\email{huaijie\_wang@163.com}

\author{Wei Wang}
\address{School of Mathematical Sciences, Peking University, Beijing 100871, China}
\email{gjmtamag@gmail.com,\,\,2201110024@stu.pku.edu.cn}

\begin{document}

\begin{abstract}
In this paper, we analyze global minimizers of the Landau–de Gennes functional in $3$-dimensional domains in the Lyuksyutov regime. We prove smooth convergence to an $\Ss^4$-valued limit and obtain an $O(\mu^{-1})$ estimate for the distance to $\Ss^4$ as $\mu \to +\infty$, thereby extending the convergence results of Dipasquale et al. \cite{DMP21}.
\end{abstract}
\maketitle

\section{Introduction}

\subsection{Main results}

Nematic liquid crystals are materials made up of elongated molecules that have long-range alignment while remaining fluid at larger scales. In this phase, molecules align in preferred local directions, but the alignment may vary across space and may break down near defects or interfaces. The alignment remains unchanged when the molecular direction is reversed, leading to non-orientable configurations that may contain topological singularities. At large scales, nematic liquid crystals are typically modeled using continuum models. Among these, the Landau-de Gennes theory \cite{B17, DG93, MN14, V18} has proved effective in describing nematic phases, including the structure and optical properties of defects observed in experiments \cite{K89, L01}.

The Landau-de Gennes theory represents nematic order by a symmetric, traceless second-order tensor. Such tensors form a $5$-dimensional space
$$
\Ss_0:=\left\{Q\in \mathbb{M}_3(\R):Q^{\T}=Q,\,\,\tr Q=0\right\},
$$
where $ \mathbb{M}_3(\R) $ denotes the space of $ 3\times 3 $ real matrices. With respect to an orthonormal basis, $\Ss_0$ is isomorphic to $\R^5$, and the set of tensors satisfying $|Q|=1$ is naturally identified with the $4$-dimensional sphere $\Ss^4$. Three different phases are recognized: isotropic when $|Q|=0$, uniaxial when two eigenvalues coincide, and biaxial when all eigenvalues are distinct.

Let $\om \subset \R^3$ be a simply connected domain occupied by a nematic liquid crystal. We consider the Landau-de Gennes functional
\be
\cF_{\op{LdG}}(Q):=\int_\om\left( \f{L}{2}|\nabla Q|^2+f_b(Q)\right)\ud x,\label{LdG1}
\ee
where the configuration satisfies $Q\in H^1(\om,\Ss_0)$, and the bulk energy density, $f_b$, is given by
\be
f_b(Q)=-\f{a^2}{2}\tr(Q^2)-\f{b^2}{3}\tr(Q^3)+\f{c^2}{4}\left(\tr(Q^2)\right)^2+C.
\ee
Here $a, b$, and $c$ are material-dependent constants and $C=C(a,b,c)$ is chosen so that $\min_{\Q\in\Ss_0}f_b(Q)=0$. The minima of $f_b$ form a vacuum manifold of uniaxial states, where
$$
\mathcal N:=\left\{s_*\(\mathbf n\otimes\mathbf n-\f{1}{3}\mathbf I\); \mathbf n\in\mathbb S^2\right\}\subset \Ss_0,
$$
where
$$
s_*:=\f{1}{4c^2}(b^2+\sqrt{b^4+24a^2c^2}).
$$
The vacuum manifold $\mathcal N$ is diffeomorphic to the real projective plane $\R\mathbb{P}^2$. Consequently, $\pi_2(\mathcal N)=\mathbb Z$ and $\pi_1(\mathcal N)=\mathbb Z_2$, reflecting the topology responsible for defect formation. The Landau-de Gennes framework is often studied in regimes where the elastic constant $L$ is small. This situation arises when the macroscopic length scale of the domain is much larger than the material's intrinsic microscopic length scale. In such regimes, one can set $L=\varepsilon^2\ll 1$ and rescale the energy to obtain the energy density $\f{1}{2}|\nabla Q|^2+\f{1}{\varepsilon^2}f_b(Q)$. Formally setting $\varepsilon= 0$ reduces the Landau-de Gennes functional \eqref{LdG1} to the Dirichlet energy for maps taking values in $\mathcal N$. 

A substantial body of work has addressed the asymptotics of minimizers of \eqref{LdG1} concerning the vanishing elasticity limit (i.e., the elastic parameter $\varepsilon\to 0^+$). In $3$-dimensional domains with uniformly bounded total energy, it was shown in \cite{MZ10} that global minimizers of \eqref{LdG1} converge to uniaxial configurations governed by the one-constant Oseen-Frank theory, with strong convergence away from finitely many point defects. The analysis was subsequently generalized in \cite{Can17}, where the energy is allowed to have a logarithmic divergence as $\varepsilon \to 0^+$. In this setting, the defect set includes both line defects and discrete point singularities. For further analysis of defects, we refer to works like \cite{FWW25a, FWW25b, Geng23}. 

Another direction concerns the low-temperature limit, which is extensively studied in \cite{Can15, CL17, HMO17}. A further limiting setting is the Lyuksyutov regime, in which the limiting map takes values in $\Ss^4$. To describe this regime, we rewrite the Landau–de Gennes energy in the form used in \cite{DMP21}. We rescale the tensor field by setting
$$
\Q(x):=\sqrt{\f{3}{2}}\f{1}{s_*}Q(x).
$$
Then the energy functional becomes
$$
\cF_{\op{LdG}}(Q)=\f{2}{3}s_*^2L\cF_{\lambda,\mu}(\Q),
$$
where
\be
\cF_{\lda,\mu}(\Q):=\int_{\om}\(\f{1}{2}|\na\Q|^2+f_{\lda,\mu}(\Q)\)\ud x.\label{LdG}
\ee
Here the reduced parameters $\lambda$ and $\mu$ are given by
$$
\lambda:=\sqrt{\f{2}{3}}\f{b^2s_*}{L},\quad \mu:=\f{a^2}{L}.
$$
Formally, the low-temperature limit and the small elastic constant limit correspond to the regimes $ a^2\to+\ift $, $ b^2=|a|^{-1} $, and $ L\to 0^+ $, respectively. The reduced energy density $f_{\lambda,\mu}$ becomes
\be
f_{\lda,\mu}(\Q):=\lda W(\Q)+\f{\mu}{4}(1-|\Q|^2)^2,
\ee
where
\be
W(\Q):=\f{1}{4\sqrt{6}}|\Q|^4-\f{1}{3}\tr\Q^3+\f{1}{12\sqrt{6}}.
\ee
In particular, when $|\Q|=1$, i.e. $\Q\in\Ss^4$, we have
\be
W(\Q)=\f{1}{3\sqrt{6}}\left(1-\sqrt{6}\f{\tr(\Q^3)}{|\Q|^3}\right),
\ee
which penalizes only biaxiality. The constraint $|\Q|=1$ is called the \emph{Lyuksyutov constraint} in the series of works by Dipasquale et al. \cite{DMP21, DMP24a, DMP24b}. The limiting problem as $\mu \to +\infty$ is called the Lyuksyutov regime. It was proved in \cite{DMP21} that, as $\mu\to+\infty$ with $\lambda$ fixed, there exists a minimizer $\Q_\lambda$ of
$$
\cE_{\lda}(\Q):=\int_{\om}\(\f{1}{2}|\na\Q|^2+\lda W(\Q)\)\ud x,\quad \Q\in H^1(\om,\Ss^4),
$$
such that the global minimizers of \eqref{LdG} converge strongly to $\Q_\lambda$ in $H^1$. In this paper, we extend this result by proving that such convergence can be extended to the $C^\infty$-topology. Under the above renormalization, the analysis in \cite{MZ10} corresponds to the regime $\lambda\to \infty$ and $\mu\to \infty$ with $\lambda\sim \mu$. For defect analysis in this direction, we refer to \cite{GW25, TY23, Yu20}.

We have the corresponding Euler-Lagrange equation
\be
\begin{aligned}
\Delta\Q&=D_{\Q}f_{\lda,\mu}(\Q)+\f{\lda}{\sqrt{6}}|\Q|^2\bI=\lda\(\f{1}{\sqrt{6}}|\Q|^2\Q+\f{1}{3}|\Q|^2\bI-\Q^2\)+\mu(|\Q|^2-1)\Q,
\end{aligned}\label{EulerLagrange}
\ee
where $ D_{\Q}f_{\lda,\mu}(\Q)\in\mathbb{M}_3(\R) $ is defined by
$$
(D_{\Q}f_{\lda,\mu}(\Q))_{ij}=\f{\pa f_{\lda,\mu}}{\pa\Q_{ij}}(\Q)\quad\text{for any }i,j\in\{1,2,3\}.
$$

Define
$$
e_{\lda,\mu}(\Q):=\f{1}{2}|\na\Q|^2+f_{\lda,\mu}(\Q).
$$
We now state the main theorem of this paper.

\begin{thm}\label{main1}
Let $\lambda,\mu>0$, let $\om$ be a bounded $C^3$ domain, and let $\Q_b\in C^2(\pa\om,\Ss^4)$. Assume that $\{\Q_{\lambda,\mu}\}_{\mu>0}$ is a family of minimizers of
$$
\min\{\cF_{\lda,\mu}(\Q):\Q\in H^1(\om,\Ss_0),\,\,\Q=\Q_b\text{ on }\pa\om\}.
$$
Then there exists $\Q_{\lambda}\in H^1(\om,\Ss^4)$ such that the following properties hold.
\begin{enumerate}[label=$(\theenumi)$]
\item\label{pr1} For each $\lambda>0$, $\Q_{\lambda}$ is a minimizer of
$$
\min\{\cE_{\lda}(\Q):\Q\in H^1(\om,\Ss^4),\,\,\Q=\Q_b\text{ on }\pa\om\},
$$
and as $\mu\to+\infty$
$$
\Q_{\lambda,\mu}\to\Q_{\lambda}\text{ strongly in }H^1(\om,\Ss_0).
$$
\item\label{pr2}There exists a constant $C>0$, depending only on $\om$ and $\Q_b$, such that
$$
\limsup_{\mu\to+\ift}\mu(1-|\Q_{\lda,\mu}|)\leq C.
$$
\item\label{pr3} For any $ \al\in(0,1) $, up to a subsequence, 
$$
\Q_{\lda,\mu}\to\Q_{\lda}\text{ in }C_{\loc}^{\ift}(\om,\Ss_0)\cap C^{1,\al}(\ol{\om},\Ss_0).
$$
\end{enumerate}
\end{thm}

\begin{rem}\label{boundedenergy}
Since $\Ss^4$ is simply connected, $\Q_b\in C^2(\pa\om,\Ss^4)$, and $\om$ is a bounded $C^3$ domain, it follows from the finite-energy extension result \cite[Theorem 6.2]{HL87} that there exists $\PP\in H^1(\om,\Ss^4)$ such that
$$
\int_{\om}|\na\PP|^2\leq C.
$$
In particular, 
$$
\cF_{\lda,\mu}(\PP)\leq C.
$$
The existence of minimizers $\{\Q_{\lambda,\mu}\}_{\mu>0}$ then follows from the direct method of the calculus of variations. Moreover, the minimizing property of $\Q_{\lambda,\mu}$ implies
$$
\cF_{\lda,\mu}(\Q_{\lda,\mu})\leq\cF_{\lda,\mu}(\PP)\leq C,
$$
where $C>0$ is independent of $\mu$. Hence, under the assumptions of Theorem \ref{main1}, the values $\cF_{\lambda,\mu}(\Q_{\lambda,\mu})$ are uniformly bounded with respect to $\mu$.
\end{rem}

\begin{rem}\label{Qldasmooth}
The Theorem \ref{boundedenergy}\ref{pr1} follows from \cite[Theorem 1.3]{DMP21}. The main difference between the Lyuksyutov regime and the small-elasticity limit studied in \cite{MZ10} is that the limiting map $\Q_{\lambda}$ is smooth (see \cite[Theorem 1.2]{DMP21}). Consequently, no defect analysis is required, and the convergence results in Theorem \ref{boundedenergy}\ref{pr2}$\&$\ref{pr3} are sharp and global.
\end{rem}

\subsection{Main idea and the sketch of the proof} We follow the arguments developed in \cite{MZ10} and \cite{NZ13}.

We first establish interior and boundary partial regularity estimates. For the interior case, we show that if $\Q\in C^{\infty}(B_{2r},\Ss_0)$ solves \eqref{EulerLagrange} and satisfies
\be
\f{1}{r}\int_{B_{2r}}e_{\lda,\mu}(\Q)\ll 1\quad\Longrightarrow\quad \sup_{B_r}e_{\lda,\mu}(\Q)\leq C,\label{partialregularityuse}
\ee
where $C>0$ is independent of $\mu$. As summarized in \cite{CL22}, the argument relies on the following three ingredients.
\begin{itemize}
\item The uniform boundedness 
$$
\|\Q_{\lda,\mu}\|_{L^{\ift}(B_{2r})}\leq C,
$$
where $ C>0 $ does not depend on $ \mu $.
\item A monotonicity formula
$$
\f{\ud}{\ud\rho}\(\f{1}{\rho}\int_{B_{\rho}(x)}e_{\lda,\mu}(\Q)\)\geq 0
$$
valid whenever $ x\in B_{2r} $.
\item The Bochner-type inequality
\be
-\Delta e_{\lda,\mu}(\Q)\leq Ce_{\lda,\mu}^2(\Q)\label{Bochner1}
\ee
with a constant $C>0$ independent of $\mu$.
\end{itemize}
The first two properties are straightforward in this model. However, the Bochner-type inequality \eqref{Bochner1} is difficult to obtain. Instead, we prove
\be
-\Delta e_{\lda,\mu}(\Q)\leq C(e_{\lda,\mu}^2(\Q)+\lda^2).\label{Bochner2}
\ee
Compared with \eqref{Bochner1}, inequality \eqref{Bochner2} contains the additional term $C\lambda^2$. This term is acceptable and allows us to adapt the arguments of \cite{MZ10}. Indeed, after the standard scaling 
$$
\PP(y):=\Q(x_0+ry),
$$
the map $\PP$ satisfies \eqref{EulerLagrange} with $\lambda$ replaced by $r\lambda$ and $\mu$ replaced by $r\mu$. Inequality \eqref{Bochner2} then becomes
$$
-\Delta e_{r\lda,r\mu}(\PP)\leq C(e_{r\lda,r\mu}(\PP)+\lda^2r^2).
$$
As $r\to0^+$, the error term becomes small. This observation allows us to adapt the methods of \cite{MZ10} to the present setting.

As noted in Remark \ref{Qldasmooth}, the limiting map $\Q_{\lambda}$ is smooth. Therefore the small-energy condition \eqref{partialregularityuse} holds for sufficiently small radii $r>0$. We then follow the argument of \cite{NZ13} to obtain uniform regularity of $\Q_{\lambda,\mu}$ with respect to $\mu$ and to prove higher-order convergence.

\subsection{Notations} We use the following conventions in this paper.

\begin{itemize}
\item Throughout the paper, $C$ denotes a positive constant. To indicate dependence on parameters $a_1,a_2,\dots$, we write $C(a_1,a_2,\dots)$. The value of $C$ may change from line to line.
\item We use the Einstein summation convention, summing over repeated indices.
\item $ \MM_3(\R) $ denotes the space of $3\times3$ real matrices, and $\bI$ denotes the identity matrix.
\item Matrices are denoted by bold capital letters such as $\A$ and $\B$. Unless otherwise stated, matrices belong to $ \MM_3(\R) $.
\item For $ \A,\B\in\MM_3(\R) $,
$$
\A: \B:=\A_{ij}\B_{ij},\quad |\A|^2=\A:\A.
$$
\item Let $ \A,\B:\om\subset\R^3\to\mathbb{M}_3(\R) $ be differentiable matrix-valued functions. We write
$$
\na\A=D\A:=(\pa_1\A,\pa_2\A,\pa_3\A),
$$
and
$$
\na\A:\na \B=D\A:D\B:=\pa_{\ell}\A_{ij}\pa_{\ell}\B_{ij},\quad |\na\A|^2=|D\A|^2=\na\A:\na\A.
$$
For $ j\in\Z_+ $, 
$$
|D^j\A|^2:=\sum_{|\al|=j}|\pa^{\al}\A|^2,
$$
where $ \al $ is the multi-index.
\item For $\m,\n\in\R^3$,
$$
\m\cdot\n=\n_i\m_i,\quad\m\otimes\n\in\mathbb{M}_3(\R),\,\,(\m\otimes\n)_{ij}=(\m_i\n_j).
$$
\item For $ x\in\R^3 $ and $ r>0 $, we denote
$$
B_r(x):=\{x\in\R^3:|y-x|<r\}.
$$
When $x=0$, we write $B_r$.
\end{itemize}

Next, we recall the regularity assumption of a bounded domain.

\begin{defn}\label{DefnLocal}
Let $U\subset\R^3$ be a bounded domain. We say that $U$ is a $C^k$ domain, where $k\in\Z_+$, if there exist constants
\be
r_{U,k}>0,\quad M_{U,k}>0,
\label{r0M0}
\ee
and a continuous nondecreasing function
\be
\omega_k:[0,+\infty)\to[0,+\infty),\quad
\lim_{t\to0^+}\omega_k(t)=0,\label{wkdef}
\ee
such that the following holds. For every $x_0\in\pa U$, there exist a $C^k$ function $\psi:\R^2\to\R$, a translation and rotation of coordinates such that $x_0=(0,0,0)$ and
\be
U_r(x_0):=U\cap B_r(x_0)\{(y_1,y_2,y_3):y_3>\psi(y_1,y_2)\}\cap B_r(x_0)\label{UcapB}
\ee
for all $ 0<r<20(M_{U,k}+1)r_{U,k} $, and
\be
\begin{gathered}
\psi(0,0)=0,\quad\|D^i\psi\|_{L^{\ift}(\R^2)}\leq M_{U,k}\quad\text{for }i=1,2,...,k,\\
|\psi(x')-\psi(y')|\leq\w_k(|x'-y'|)\quad\text{for any }x',y'\in\R^2.
\end{gathered}\label{Dkest}
\ee
We call $U$ a bounded $C^k$ domain with parameters $ M_{U,k},r_{U,k} $, and $ \w_k $ if $ U $ is a bounded domain satisfying \eqref{UcapB} and \eqref{Dkest}, where $M_{U,k},r_{U,k},\omega_k$ if \eqref{UcapB}-\eqref{Dkest} hold. When $x_0=0$, we write $U_r:=U_r(x_0)$. We also define
\begin{align*}
\ol{U}_r(x_0):=\ol{U}\cap\ol{B}_r(x)=\{(y_1,y_2,y_3)\in\R^3:(y_1,y_2)\in\R^2,\,\,y_3\geq\psi(y_1,y_2)\}\cap\ol{B}_r(x_0),\\
\Ga_r^U(x_0):=\pa U\cap B_r(x)=\{(y_1,y_2,y_3)\in\R^3:(y_1,y_2)\in\R^2,\,\,y_3=\psi(y_1,y_2)\}\cap B_r(x_0).
\end{align*}
\end{defn}

\section{Preliminaries}

\subsection{Monotonicity} We recall the monotonicity formula corresponding to \eqref{LdG}.

\begin{prop}\label{Mono}
Let $ U\subset\R^3 $ be a bounded domain. Assume that $ \Q\in C^{\infty}(U,\Ss_0) $ satisfies \eqref{EulerLagrange}. For $ x_0\in\overline{U} $, define
$$
\Theta_{\lambda,\mu}(\Q;x_0,r):=\frac{1}{r}\int_{U_r(x_0)} e_{\lambda,\mu}(\Q)\,dx.
$$
Then the following properties hold.

\begin{enumerate}[label=$(\theenumi)$]

\item If $ x_0\in U $, then
$$
\Theta_{\lambda,\mu}(\Q;x_0,r)
\le
\Theta_{\lambda,\mu}(\Q;x_0,R)
$$
for any $ 0<r<R<\dist(x_0,\partial U) $.

\item If $ U $ is a $ C^3 $ domain with parameters $ M_{U,3},r_{U,3},\omega_3 $, $ x_0\in\partial U $, and $ \Q=\Q_b\in C^2(\partial U,\Ss^4) $, then for any $
0<r<R<r_{U,3} $, we have
$$
\Theta_{\lambda,\mu}(\Q;x_0,r)\le\Theta_{\lambda,\mu}(\Q;x_0,R)+C(R-r),
$$
where $ C>0 $ depends only on $ M_{U,3},r_{U,3},\omega_3,\|\nabla\Q\|_{L^2(U)} $, and $ \Q_b $.
\end{enumerate}
\end{prop}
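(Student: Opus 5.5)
We derive both parts from the Pohozaev (domain-variation) identity for \eqref{EulerLagrange}. The equation can be written as $\Delta\Q=D_{\Q}f_{\lda,\mu}(\Q)+\text{(multiple of }\bI)$. The $\bI$ term is a Lagrange multiplier for the trace constraint, and it is orthogonal to $\pa_k\Q$ because $\tr\pa_k\Q=0$. Hence $\Delta\Q:\pa_k\Q=\pa_k\big(f_{\lda,\mu}(\Q)\big)$. We combine this with the identity $\Delta\Q:\pa_k\Q=\pa_\ell(\pa_\ell\Q:\pa_k\Q)-\tfrac12\pa_k|\na\Q|^2$. This gives the conservation law
$$
\pa_\ell T_{k\ell}=0,\qquad T_{k\ell}:=\pa_k\Q:\pa_\ell\Q-\delta_{k\ell}\,e_{\lda,\mu}(\Q).
$$
We multiply by $(x-x_0)_k$ and integrate over $B_\rho(x_0)$. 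Using $\tr T=|\na\Q|^2-3e_{\lda,\mu}=-e_{\lda,\mu}-2f_{\lda,\mu}$, we obtain
$$
\int_{B_\rho}\big(e_{\lda,\mu}(\Q)+2f_{\lda,\mu}(\Q)\big)-\rho\int_{\pa B_\rho}e_{\lda,\mu}(\Q)+\rho\int_{\pa B_\rho}|\pa_r\Q|^2=0 .
$$
Dividing by $\rho^2$, this becomes
$$
\frac{\ud}{\ud\rho}\Theta_{\lda,\mu}(\Q;x_0,\rho)=\frac1\rho\int_{\pa B_\rho}|\pa_r\Q|^2+\frac{2}{\rho^2}\int_{B_\rho}f_{\lda,\mu}(\Q)\ge0,
$$
since $f_{\lda,\mu}\ge0$. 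Indeed $W\ge0$, because $\tr\Q^3\le|\Q|^3/\sqrt6$ and $\tfrac{1}{4\sqrt6}t^4-\tfrac{1}{3\sqrt6}t^3+\tfrac1{12\sqrt6}\ge0$. Integrating in $\rho$ from $r$ to $R$ proves part (1). Smoothness justifies every step for $\rho<\dist(x_0,\pa U)$.

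For part (2), we follow the boundary monotonicity argument of \cite{MZ10} (and Schoen's for harmonic maps). We flatten or localize near $x_0\in\pa U$ using the $C^3$ graph $\psi$ from Definition \ref{DefnLocal}. We again test $\pa_\ell T_{k\ell}=0$ against $(x-x_0)_k$ on $U_\rho(x_0)$. Compared with the interior case, there is an additional boundary integral over $\Ga^U_\rho(x_0)$, namely $\int_{\Ga_\rho}T_{k\ell}(x-x_0)_k\nu_\ell$. On $\Ga_\rho$ we have $f_{\lda,\mu}(\Q_b)=\lda W(\Q_b)\le C$, because $|\Q_b|=1$. We split $\na\Q$ into its tangential part $\na_T\Q_b$, which is controlled by $\|\Q_b\|_{C^2}$, and its normal part $\pa_\nu\Q$. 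The normal-normal piece is $(x-x_0)\cdot\nu\,(|\pa_\nu\Q|^2-\tfrac12|\pa_\nu\Q|^2)$. Since $|(x-x_0)\cdot\nu|\le C|x-x_0|^2$ on a $C^2$ graph, this piece is bounded by $C\rho^2\int_{\Ga_\rho}|\pa_\nu\Q|^2$. The cross terms are bounded by $C\rho\int_{\Ga_\rho}|\pa_\nu\Q||\na_T\Q_b|$. The remaining terms are $O(\rho^3)$.

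The \textbf{main obstacle} is controlling $\int_{\Ga_\rho}|\pa_\nu\Q|^2$ uniformly in $\mu$. We will handle it with a second Pohozaev identity: test $\pa_\ell T_{k\ell}=0$ with a $C^2$ vector field $X$ that extends the unit normal $\nu$ near $\pa U$ (this is where $C^3$ regularity of $\pa U$ is used), cut off at scale $r_{U,3}$. On the boundary, $T(X,\nu)=\tfrac12|\pa_\nu\Q|^2-\tfrac12|\na_T\Q_b|^2-\lda W(\Q_b)$. This yields
$$
\int_{\pa U\cap B_{r_{U,3}}}|\pa_\nu\Q|^2\le C\Big(\int_U e_{\lda,\mu}(\Q)+\|\Q_b\|_{C^1}^2+\lda\Big)\le C .
$$
Here the interior term is $T:\na X$, bounded by $C\,e_{\lda,\mu}$, and the total energy is bounded by Remark \ref{boundedenergy}. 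Because the constant $C$ in part (2) may depend on $\|\na\Q\|_{L^2(U)}$, we should check whether $\lda$ enters. The term $\lda\int W$ is bounded by the energy, so it does, but only through the total energy, which is acceptable. Hence the boundary contributions, after division by $\rho^2$, are bounded by $C(1+g(\rho))$ with $\int_0^{r_{U,3}}g\le C$: we get $\rho^{-1}\int_{\Ga_\rho}|\pa_\nu\Q||\na_T\Q_b|\le C\rho^{-1}\big(\rho\int_{\Ga_\rho}|\pa_\nu\Q|^2\big)^{1/2}$, plus similar terms. We then use the co-area formula in $\rho$ together with Cauchy–Schwarz.

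Finally, we integrate $\tfrac{\ud}{\ud\rho}\Theta\ge -C(1+g(\rho))$ from $r$ to $R$. This gives part (2), where the constant in front of $(R-r)$ comes from absorbing the integrable remainder. If necessary, we use the cruder form $\Theta(r)\le\Theta(R)+C(R-r)$ with the bound $\int_r^R g\le C(R-r)^{1/2}\cdot C\le C(R-r)$ after restricting to $R<r_{U,3}\le1$; otherwise we adjust by an $e^{C\rho}$ factor as in \cite{MZ10}.
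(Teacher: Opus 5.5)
Your approach is the standard one that the paper relies on when it cites \cite[Lemma 4.4]{DMP21} and \cite[Lemmas 2 and 9]{MZ10}. The stress--energy tensor $T_{k\ell}=\pa_k\Q:\pa_\ell\Q-\delta_{k\ell}e_{\lda,\mu}(\Q)$ is divergence free, because the $\bI$-multiplier drops out when $\tr\pa_k\Q=0$. Testing with $x-x_0$ gives the monotonicity identity, and testing with an extension of $\nu$ gives the bound on $\int_{\pa U}|\pa_\nu\Q|^2$. Part (1) is complete and correct as written. The set-up of part (2) is also correct, including $|(x-x_0)\cdot\nu|\le C|x-x_0|^2$ on $\Ga_\rho$ and the expression for $T(\nu,\nu)$ on $\pa U$.

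The step that fails as written is the last one in part (2). Your bound $C\rho^{-1}\bigl(\rho\int_{\Ga_\rho}|\pa_\nu\Q|^2\bigr)^{1/2}$ only gives $g(\rho)\sim\rho^{-1/2}$. Integrating $\f{\ud}{\ud\rho}\Theta\ge -C(1+g)$ then yields an error $C(R-r)+C(\sqrt R-\sqrt r)$, not $C(R-r)$. The inequality $(R-r)^{1/2}\le C(R-r)$ that you invoke is false as $R-r\to0$, and the co-area and $e^{C\rho}$ remarks do not repair this.

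The fix is to sharpen your Cauchy--Schwarz step. $\Ga_\rho$ is a graph over a disc of radius $\rho$ with $|\na\psi|\le M_{U,3}$, so $|\Ga_\rho|\le C\rho^2$ and
\begin{equation*}
\f{1}{\rho}\int_{\Ga_\rho}|\pa_\nu\Q|\,|\na_T\Q_b|\le \f{C}{\rho}\,\|\na_T\Q_b\|_{L^\infty}\,|\Ga_\rho|^{1/2}\,\|\pa_\nu\Q\|_{L^2(\Ga_\rho)}\le C\,\|\pa_\nu\Q\|_{L^2(\pa U\cap B_{r_{U,3}}(x_0))}.
\end{equation*}
Combine this with your bound $\rho^{-2}\cdot C\rho^2\int_{\Ga_\rho}|\pa_\nu\Q|^2\le C$ and the bounded remaining terms. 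This gives $\f{\ud}{\ud\rho}\Theta_{\lda,\mu}(\Q;x_0,\rho)\ge -C$ for a.e.\ $\rho\in(0,r_{U,3})$, and integrating from $r$ to $R$ gives exactly the $C(R-r)$ error.

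Two smaller points:
\begin{enumerate}
\item Both integrations by parts, on $U$ and on $U_\rho(x_0)$, need $\Q\in C^1(\ol U)$ so that $\pa_\nu\Q$ has a trace on $\pa U$. This holds for the minimizers considered, by elliptic regularity with $C^3$ boundary and $C^2$ data, but it is not contained in the hypothesis $\Q\in C^\infty(U)$.
\item Your constant depends on $\lda$ and on the full energy $\int_U e_{\lda,\mu}(\Q)$, not only on $\|\na\Q\|_{L^2(U)}$. This is harmless in the application, since $\lda$ is fixed and the energy is bounded by Remark~\ref{boundedenergy}.
\end{enumerate}
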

\begin{proof}
This is exactly \cite[Lemma 4.4]{DMP21}. See also
\cite[Proposition 2.4]{DMP21} and
\cite[Lemmas 2 and 9]{MZ10}.
\end{proof}

\subsection{Bochner-type inequality} In this subsection, we establish a Bochner-type inequality with an error term for solutions of \eqref{EulerLagrange}.

\begin{lem}\label{Bochner}
Assume $ \Q\in C^{\ift}(\om,\Ss_0) $ satisfies \eqref{EulerLagrange} with $ |\Q|\leq 1 $ in $ \om $. There exists an absolute constant $ C>0 $ such that if $ |\Q|\in[\f{1}{2},1] $ in $ \om $, then
$$
-\Delta e_{\lda,\mu}(\Q)\leq C(e_{\lda,\mu}^2(\Q)+\lda^2).
$$
\end{lem}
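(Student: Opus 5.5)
We compute $\Delta e_{\lda,\mu}(\Q)$ directly, split it into the gradient part $\f12\Delta|\na\Q|^2$ and the potential part $\Delta f_{\lda,\mu}(\Q)$, and use \eqref{EulerLagrange} to replace $\Delta\Q$. Write $\Delta\Q=\lda G(\Q)+\mu(|\Q|^2-1)\Q$, where
$$
G(\Q):=\f{1}{\sqrt6}|\Q|^2\Q+\f13|\Q|^2\bI-\Q^2 .
$$
Since $|\Q|\le1$, we have $|G|+|DG|+|D^2G|\le C$. Bochner's identity gives
$$
\f12\Delta|\na\Q|^2=|D^2\Q|^2+\na\Q:\na\Delta\Q .
$$
Differentiating the equation yields
$$
\pa_\ell\Delta\Q=\lda DG(\Q)[\pa_\ell\Q]+\mu(|\Q|^2-1)\pa_\ell\Q+2\mu(\Q:\pa_\ell\Q)\Q .
$$
Hence
$$
\na\Q:\na\Delta\Q=\lda\,\pa_\ell\Q: DG[\pa_\ell\Q]+\mu(|\Q|^2-1)|\na\Q|^2+2\mu\sum_\ell(\Q:\pa_\ell\Q)^2 .
$$
The last term is nonnegative. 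The $\lda$-term is bounded below by $-C\lda|\na\Q|^2\ge -C(e^2+\lda^2)$ by Young's inequality. The dangerous term is $\mu(|\Q|^2-1)|\na\Q|^2$, which is negative and carries a factor $\mu$ that is not uniformly controlled. We bound it by noting that
$$
\mu(1-|\Q|^2)|\na\Q|^2\le 2\sqrt{\mu}\sqrt{\tfrac{\mu}{4}(1-|\Q|^2)^2}\,|\na\Q|^2 ,
$$
which has the wrong homogeneity, so it cannot be handled alone. It must be compensated by the potential part.

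For the potential part,
$$
\Delta f_{\lda,\mu}(\Q)=D^2f[\na\Q,\na\Q]+Df:\Delta\Q .
$$
Using the equation, $Df=\Delta\Q-\f{\lda}{\sqrt6}|\Q|^2\bI$, and the $\bI$-part is orthogonal to the traceless $\Delta\Q$. Therefore
$$
Df:\Delta\Q=|\Delta\Q|^2\ge0 .
$$
For the Hessian term, the $\lda W$ part contributes at least $-C\lda|\na\Q|^2$, which is acceptable. The $\mu$ part is
$$
\mu\bigl(2(\Q:\pa_\ell\Q)^2+\ldots\bigr)
$$
with Hessian $\mu\bigl[(|\Q|^2-1)|\na\Q|^2+2\sum_\ell(\Q:\pa_\ell\Q)^2\bigr]$. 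This again contains $-\mu(1-|\Q|^2)|\na\Q|^2$. So the total bad contribution is $-2\mu(1-|\Q|^2)|\na\Q|^2$, and the task reduces to showing
$$
2\mu(1-|\Q|^2)|\na\Q|^2\le |D^2\Q|^2+|\Delta\Q|^2+C(e^2+\lda^2) .
$$

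This reduction is the main obstacle. My plan is to use the good term $|\Delta\Q|^2$ together with $|\Q|\ge\f12$. Projecting the equation onto $\Q$ gives
$$
\Q:\Delta\Q=\lda\,\Q:G+\mu(|\Q|^2-1)|\Q|^2 ,
$$
so
$$
\mu(1-|\Q|^2)\le 4\bigl(|\Delta\Q|+C\lda\bigr),
$$
using $|\Q|^2\ge\f14$ and $|\Q|\le1$. Then
$$
2\mu(1-|\Q|^2)|\na\Q|^2\le 8\bigl(|\Delta\Q|+C\lda\bigr)|\na\Q|^2\le \tfrac12|\Delta\Q|^2+C|\na\Q|^4+C\lda^2+C|\na\Q|^4 ,
$$
which is at most $\tfrac12|\Delta\Q|^2+C(e^2+\lda^2)$. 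This absorbs the bad term into $|\Delta\Q|^2$. The lower bound $|\Q|\ge\f12$ is used exactly here, and $|\Q|\le1$ fixes the sign of $1-|\Q|^2$ and bounds the coefficients of $G$ and $W$.

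Collecting everything gives
$$
\Delta e_{\lda,\mu}(\Q)\ge |D^2\Q|^2+\tfrac12|\Delta\Q|^2-C(e^2+\lda^2)\ge -C(e_{\lda,\mu}^2(\Q)+\lda^2),
$$
with $C$ absolute. The remaining work is routine bookkeeping of the Hessians of $W$ and of $\tr\Q^3$.
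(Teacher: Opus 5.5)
Your proof is correct and follows the paper's argument. Both expand $\Delta e_{\lda,\mu}(\Q)$ via \eqref{EulerLagrange}, keep the good terms $|\Delta\Q|^2$ and $\mu|\Q:\na\Q|^2$, and absorb the single dangerous term $\mu(1-|\Q|^2)|\na\Q|^2$ into $|\Delta\Q|^2$ using $|\Q|\ge\f12$. The only cosmetic difference is how the absorption is phrased: you obtain $\mu(1-|\Q|^2)\le 4(|\Delta\Q|+C\lda)$ by pairing the equation with $\Q$, whereas the paper expands $|\Delta\Q|^2\ge\f{\mu^2}{8}(1-|\Q|^2)^2-C\lda^2$ and applies Young's inequality to the bad term. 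Your coefficient $2$ on that term is in fact the correct one.
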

\begin{proof}
Since $ \tr\Q=0 $, a direct calculation and the Cauchy-Schwarz inequality give
\be
\begin{aligned}
-\Delta e_{\lda,\mu}(\Q)&=-\na(\Delta\Q):\na\Q-|D^2\Q|^2-\pa_k\(D_{\Q}f_{\lda,\mu}(\Q):\pa_k\Q\)\\
&\quad\quad\stackrel{\eqref{EulerLagrange}}{\leq}-2\pa_k(D_{\Q}f_{\lda,\mu}(\Q)):\pa_k\Q-D_{\Q}f_{\lda,\mu}(\Q):\Delta\Q\\
&\quad\quad=2\lda\pa_k(\Q^2):\pa_k\Q-\f{4(\lda+\sqrt{6}\mu)}{\sqrt{6}}|\na\Q:\Q|^2\\
&\quad\quad\quad\quad+\mu(1-|\Q|^2)|\na\Q|^2-|\Delta\Q|^2-\f{2\lda}{\sqrt{6}}|\Q|^2|\na\Q|^2\\
&\quad\quad\leq C\lda^2+C|\na\Q|^4+\f{\mu^2}{16}(1-|\Q|^2)^2-|\Delta\Q|^2.
\end{aligned}\label{DeltanablaQ}
\ee
Since $ \|\Q\|_{L^\infty(\Omega)}\le 1 $, another application of the Cauchy-Schwarz inequality yields
\begin{align*}
|\Delta\Q|^2&\geq\mu^2(1-|\Q|^2)^2|\Q|^2+2\lda\mu\(\f{1}{\sqrt{6}}|\Q|^2\Q+\f{1}{3}|\Q|^2\bI-\Q^2\):[(|\Q|^2-1)\Q]\\
&=\mu^2(1-|\Q|^2)^2|\Q|^2+\f{2\lda\mu}{\sqrt{6}}(|\Q|^2-1)(|\Q|^4-\tr\Q^3)\\
&\ge\f{\mu^2}{4}(1-|\Q|^2)^2-C\lda^2(|\Q|^4-\tr\Q^3)^2-\f{\mu^2}{8}(1-|\Q|^2)^2\\
&\geq \f{\mu^2}{8}(1-|\Q|^2)^2-C\lda^2.
\end{align*}
This, together with \eqref{DeltanablaQ}, implies that
\begin{align*}
-\Delta e_{\lda,\mu}(\Q)\leq C\lda^2+C|\na\Q|^4-\f{\mu^2}{16}(1-|\Q|^2)^2\leq C(e_{\lda,\mu}^2(\Q)+\lda^2),
\end{align*}
completing the proof.
\end{proof}

\subsection{Estimates on the boundary} To obtain boundary regularity (see Section \ref{BoundaryPartialReg}), we need estimates for solutions of \eqref{EulerLagrange} near the boundary.

\begin{prop}\label{boundary1}
Let $ \lambda,\mu>0 $ and let $ U $ be a bounded $ C^3 $ domain with parameters $ M_{U,3},r_{U,3},\omega_3 $. Assume that $ \Q\in C^{\ift}(U_r(x_0),\Ss_0)\cap C^2(\ol{U}_r(x_0),\Ss_0) $ solves \eqref{EulerLagrange} and 
$$
\Q\in\Ss^4\text{ on }\Ga_r^U(x_0) 
$$
for some $ x_0\in\pa U $ and $ 0<r<r_{U,3} $. If $ |\Q|\in[\f{1}{2},1] $ in $ U_r(x_0) $, then $ \Q^*=\f{\Q}{|\Q|} $ is well defined and
\be
\begin{aligned}
&\|\na(\Q-\Q^*)\|_{L^{\ift}(\Ga_{\f{r}{2}}^U(x_0))}\\
&\quad\quad\leq C\(r^{-1}\|(1-|\Q|)\|_{L^{\ift}(U_r(x_0))}+r^{\f{1}{4}}\|\na\Q\|_{L^2(U_r(x_0))}^{\f{1}{2}}\|\na\Q\|_{L^{\ift}(U_r(x_0))}^{\f{3}{2}}\)\\
&\quad\quad\quad\quad+C\(r^{-\f{1}{2}}\|\na\Q\|_{L^2(U_r(x_0))}\|\na\Q\|_{L^{\ift}(U_r(x_0))}+\lda r\),
\end{aligned}\label{naQQstar}
\ee
where $ C>0 $ depends only on $ M_{U,3},r_{U,3} $, and $ \w_3 $.
\end{prop}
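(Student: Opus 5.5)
The plan is to reduce everything to a scalar problem for the modulus. Write $\rho:=|\Q|\in[\f12,1]$ and $\Q=\rho\,\Q^*$ in $U_r(x_0)$. Then
$$
\Q-\Q^*=(\rho-1)\Q^*,\qquad \na(\Q-\Q^*)=\Q^*\otimes\na\rho+(\rho-1)\na\Q^*.
$$
On $\Ga_r^U(x_0)$ we have $\rho=1$, so the second term vanishes there. Since $\rho\equiv1$ along the boundary, tangential derivatives of $\rho$ vanish as well. Hence on $\Ga^U_{r/2}(x_0)$ we get $|\na(\Q-\Q^*)|=|\pa_\nu\rho|$, and it suffices to bound the normal derivative of $v:=1-\rho\ge0$, which vanishes on $\Ga_r^U(x_0)$.

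\textbf{Step 1: the equation for $v$.} From $\f12\Delta|\Q|^2=|\na\Q|^2+\Q:\Delta\Q$ and $|\na\Q|^2=|\na\rho|^2+\rho^2|\na\Q^*|^2$, we get
$$
\Delta\rho=\rho|\na\Q^*|^2+\Q^*:\Delta\Q .
$$
Inserting \eqref{EulerLagrange} and using $\tr\Q=0$ gives
$$
\Q^*:\Delta\Q=\lda\, g(\Q)+\mu(\rho^2-1)\rho^2\cdot\rho^{-1},
$$
where $g(\Q)=\f{1}{\rho}\bigl(\f{1}{\sqrt6}|\Q|^4-\tr\Q^3\bigr)$ satisfies $|g|\le C$ because $|\Q|\le1$. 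Therefore
$$
\Delta v=-\rho|\na\Q^*|^2-\lda g+\mu(1-\rho^2)\rho\ \ge\ -f,\qquad f:=C\bigl(|\na\Q|^2+\lda\bigr).
$$
Here we used $|\na\Q^*|\le C|\na\Q|$ since $\rho\ge\f12$. The $\mu$-term has a good sign and is simply discarded. This is the key point: it makes the estimate uniform in $\mu$.

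\textbf{Step 2: comparison.} Let $w$ solve $\Delta w=-f$ in $U_r(x_0)$ with $w=v$ on $\pa U_r(x_0)$; in particular $w=0$ on $\Ga_r^U(x_0)$. Then $\Delta(v-w)\ge0$ with zero boundary values, so $0\le v\le w$ by the maximum principle. Since $v=w=0$ on $\Ga^U_r(x_0)$, this gives $0\le\pa_\nu v\le\pa_\nu w$ there, with $\nu$ the inner normal. Hence $|\pa_\nu v|\le|\na w|$ on $\Ga^U_{r/2}(x_0)$.

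\textbf{Step 3: boundary gradient bound for $w$.} Split $w=w_1+w_2$, where $w_1$ is harmonic with the boundary data and $w_2$ has zero data and solves $\Delta w_2=-f$. After flattening the $C^3$ boundary and rescaling to unit size, boundary Schauder/$W^{2,p}$ estimates give the following.
\begin{itemize}
\item For $w_1$: $\|\na w_1\|_{L^\ift(\Ga_{r/2})}\le Cr^{-1}\|v\|_{L^\ift}$, using that $w_1$ vanishes on the flat part. This produces the term $r^{-1}\|1-|\Q|\|_{L^\ift}$.
\item For $w_2$: I would use the $W^{2,p}$ estimate with $p=4>3$ together with Morrey's inequality, which gives $|\na w_2|\le C r^{1-3/p}\|f\|_{L^p}+(\text{lower order})$. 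Interpolating, $\||\na\Q|^2\|_{L^4}\le\|\na\Q\|_{L^2}^{1/2}\|\na\Q\|_{L^\ift}^{3/2}$, which yields the term $r^{1/4}\|\na\Q\|_{L^2}^{1/2}\|\na\Q\|_{L^\ift}^{3/2}$. The constant part of $f$ contributes $C\lda r$.
\end{itemize}

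The remaining term $r^{-1/2}\|\na\Q\|_{L^2}\|\na\Q\|_{L^\ift}$ should come from the lower-order ($L^1$/$L^2$) part of the local estimate for $w_2$ when it is performed on $U_r$ rather than on a full ball, controlling $r^{-3/2}\|w_2\|_{L^2}$ by $r^{1/2}\|f\|_{L^2}$. Alternatively, it arises from the perturbation caused by flattening the boundary, whose error terms are first order in $\na w$.

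I expect the main obstacle to be Step 3. The difficulty is keeping all constants dependent only on $M_{U,3},r_{U,3},\w_3$ after flattening and scaling, and ensuring that each piece yields exactly the stated powers of $r$. The sign argument in Steps 1--2, by contrast, is what removes the $\mu$-dependence and should be robust.
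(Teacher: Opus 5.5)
Your proposal is correct and is essentially the paper's argument. The paper works with $1-|\Q|^2$ instead of $1-|\Q|$ and finishes with the linear bound $|\Q-\Q^*|(y)\le K\dist(y,\pa U)$ rather than a normal-derivative comparison, but otherwise it uses the same steps: drop the good-sign $\mu$-term, compare with the Poisson solution $w$, control the harmonic piece by the maximum principle and Caccioppoli (giving $r^{-1}\|1-|\Q|\|_{L^\infty}$), and feed the energy bound $\|\na w_2\|_{L^2}\le Cr\|f\|_{L^2}$ for the zero-data piece into the lower-order term of Lemma \ref{LempRe} with $p=4$.

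So your first explanation of the term $r^{-1/2}\|\na\Q\|_{L^2}\|\na\Q\|_{L^\infty}$ is the right one, and no flattening error is needed. One bookkeeping correction: the lower-order term in a gradient bound is $r^{-3/2}\|\na w_2\|_{L^2}$ (or $r^{-5/2}\|w_2\|_{L^2}$), not $r^{-3/2}\|w_2\|_{L^2}$. With that scaling it yields exactly $Cr^{-1/2}\|f\|_{L^2}$.
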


We first recall some basic estimates for elliptic equations,  beginning with the boundary version of Caccioppoli's inequality.

\begin{lem}\label{Caccioppoli}
Let $U$ be a bounded $C^1$ domain with parameters $M_{U,1}, r_{U,1}$, and $\omega_1$. Assume that $\Delta u = 0$ in $U_{2r}(x_0)$ and $u=0$ on $\partial U \cap B_{2r}(x_0)$, where $x_0\in \partial U$ and $0<r<\f{1}{2}r_{U,1} $. Then
\be
\|\nabla u\|_{L^2(B_r(x_0))} \le Cr^{-1}\|u\|_{L^2(B_{2r}(x_0))}.
\label{Caccio}
\ee
\end{lem}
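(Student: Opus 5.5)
The plan is the standard cutoff argument for the boundary Caccioppoli inequality. Flattening or reflection is not needed, because $u$ vanishes on the boundary portion and can be extended by zero. Concretely, extend $u$ by zero to $B_{2r}(x_0)\setminus U$; the inequality \eqref{Caccio} is understood with the norms taken over $U\cap B_r(x_0)$ and $U\cap B_{2r}(x_0)$, which is the same as over the balls for the extension. Since $U$ is a $C^1$ domain and $2r<r_{U,1}$, Definition \ref{DefnLocal} describes $U_{2r}(x_0)$ as the region above a Lipschitz graph. Hence the zero trace on $\pa U\cap B_{2r}(x_0)$ makes $\eta u$ admissible as a test function, whenever $\eta\in C_c^\infty(B_{2r}(x_0))$. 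In other words, $\eta u\in H^1_0(U_{2r}(x_0))$.

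Choose $\eta\in C_c^\infty(B_{2r}(x_0))$ with $0\le\eta\le1$, $\eta\equiv1$ on $B_r(x_0)$, and $|\na\eta|\le C/r$. Test the weak form of $\Delta u=0$ on $U_{2r}(x_0)$ against $\eta^2u$:
\[
0=\int_{U_{2r}(x_0)}\na u\cdot\na(\eta^2u)=\int_{U_{2r}(x_0)}\eta^2|\na u|^2+2\int_{U_{2r}(x_0)}\eta u\,\na u\cdot\na\eta .
\]
There is no boundary term: on $\pa B_{2r}(x_0)$ the cutoff $\eta$ vanishes, and on $\Ga_{2r}^U(x_0)$ the function $u$ vanishes. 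Now apply Cauchy--Schwarz and Young to the cross term, $2|\eta u\,\na u\cdot\na\eta|\le\f12\eta^2|\na u|^2+2u^2|\na\eta|^2$. This gives
\[
\int_{U_{2r}(x_0)}\eta^2|\na u|^2\le 4\int_{U_{2r}(x_0)}u^2|\na\eta|^2\le Cr^{-2}\int_{U_{2r}(x_0)}u^2 .
\]
Restricting the left-hand side to $B_r(x_0)$, where $\eta=1$, and taking square roots yields \eqref{Caccio} with an absolute constant $C$.

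The only delicate point is justifying the test function $\eta^2u$ up to the boundary. If $u$ is smooth up to $\Ga^U_{2r}(x_0)$, as in the intended applications, it is enough to integrate by parts directly. Otherwise, interpret $u\in H^1(U_{2r}(x_0))$ with vanishing trace in the Sobolev sense; for Lipschitz graph domains the product $\eta^2u$ then lies in $H^1_0(U_{2r}(x_0))$, which is what the weak formulation requires. With this point settled, the rest of the argument is routine. In particular, the $C^1$ parameters $M_{U,1},r_{U,1},\w_1$ enter only through the admissibility of $\eta^2u$, and the constant $C$ does not depend on them.
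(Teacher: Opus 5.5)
Your proof is correct and follows the same route as the paper. Both test $\Delta u=0$ against $\eta^2u$, where $\eta$ is a cutoff equal to $1$ on $B_r(x_0)$ with $|\nabla\eta|\le C/r$, use $u=0$ on $\Gamma_{2r}^U(x_0)$ to remove the boundary term, and absorb the cross term by Cauchy--Schwarz/Young. Your remarks on why $\eta^2u$ is an admissible test function, and on reading the norms over $U\cap B_r(x_0)$ and $U\cap B_{2r}(x_0)$, are details the paper leaves implicit.
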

\begin{proof}
Let $\varphi\in C_0^\infty(B_{2r}(x_0),[0,1])$ satisfy $\varphi\equiv1$ in $B_r(x_0)$ and $ \|\na\vp\|_{L^{\ift}(B_{2r}(x_0))}\leq Cr^{-1} $ for some absolute constant $ C>0 $. Since $u=0$ on $\Gamma_{2r}^U(x_0)$, we test the equation $\Delta u=0$ with $u\varphi^2$ and obtain
$$
\int_{U_{2r}(x_0)}\vp^2|\na u|^2\leq\left|\int_{U_{2r}(x_0)}2u\vp(\na u\cdot\na\vp)\right|.
$$
The estimate \eqref{Caccio} follows from the Cauchy-Schwarz inequality.
\end{proof}

The next lemma gives localized Lipschitz estimates for Poisson's equation.

\begin{lem}\label{LempRe}
Let $U$ be a bounded $C^3$ domain with parameters $M_{U,3}, r_{U,3}$, and $\omega_3$. Assume that $x_0\in\partial U$, $p>3$, and $0<r<r_{U,3}$. Let $ F\in L^p(U_{2r}(x_0)) $, $ g\in C^2(\Ga_{2r}^U(x_0)) $, and $ u\in W^{2,p}(U_{2r}(x_0)) $ solve
$$
\left\{\begin{aligned}
\Delta u&=F&\text{ in }& U_{2r}(x_0),\\
u&=g&\text{ on }&\Ga_{2r}^U(x_0).
\end{aligned}\right.
$$
We have the estimate
\begin{align*}
\|\na u\|_{L^{\ift}(U_r(x_0))}&\leq C\|D_{\pa U}g\|_{L^{\ift}(\Ga_{2r}^U(x_0))}+Cr\|D_{\pa U}^2g\|_{L^{\ift}(\Ga_{2r}^U(x_0))}\\
&\quad\quad+Cr^{-\f{3}{2}}\|\na u\|_{L^2(U_r(x_0))}+Cr^{1-\f{3}{p}}\|F\|_{L^p(U_{2r}(x_0))},
\end{align*}
where $D_{\partial U}^j$ $(j\in\mathbb Z_+)$ denotes tangential derivatives on $\partial U$, and $C>0$ depends only on $p,M_{U,3},r_{U,3},\omega_3$.
\end{lem}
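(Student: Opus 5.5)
The plan is to reduce to the case $g\equiv 0$ with a bounded domain of unit scale, and then apply the standard global $W^{2,p}$ theory together with Morrey's embedding. The stated constants are exactly what the scaling produces.

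\textbf{Scaling.} Set $v(y):=u(x_0+ry)$ on $\widetilde U:=r^{-1}(U-x_0)$. Then $\Delta v=r^2F(x_0+ry)$, and
\[
\|\nabla v\|_{L^\infty}=r\|\nabla u\|_{L^\infty},\qquad \|\nabla v\|_{L^2(\widetilde U_1)}=r^{-1/2}\|\nabla u\|_{L^2(U_r)},\qquad \|r^2F(x_0+r\cdot)\|_{L^p}=r^{2-3/p}\|F\|_{L^p}.
\]
For the boundary data $\tilde g(y)=g(x_0+ry)$ we get $\|D\tilde g\|_{L^\infty}=r\|D_{\partial U}g\|_{L^\infty}$ and $\|D^2\tilde g\|_{L^\infty}=r^2\|D^2_{\partial U}g\|_{L^\infty}$. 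Dividing by $r$ recovers every term in the claimed inequality. Since $r<r_{U,3}$, the rescaled boundary graph $\tilde\psi(y')=r^{-1}\psi(ry')$ has $C^3$ norms controlled by $M_{U,3}$, uniformly in $r$. So it suffices to prove the estimate for $r=1$, with constants depending only on the $C^3$ character of the domain.

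\textbf{Extension of boundary data.} Extend $\tilde g$ off the graph by $G(y',y_3):=\tilde g(y',\tilde\psi(y'))$, cut off suitably near $B_2$. Then
\[
\|\nabla G\|_{L^\infty}\le C\|D\tilde g\|_{L^\infty},\qquad \|D^2G\|_{L^\infty}\le C\bigl(\|D\tilde g\|_{L^\infty}+\|D^2\tilde g\|_{L^\infty}\bigr),
\]
with $C$ depending on $M_{U,3}$; the first-derivative term comes from derivatives of $\tilde\psi$. Now $w:=v-G$ satisfies
\[
\Delta w=\tilde F-\Delta G\quad\text{in }\widetilde U_2,\qquad w=0\quad\text{on }\widetilde\Gamma_2,
\]
and $\|\Delta G\|_{L^p(\widetilde U_2)}\le C(\|D\tilde g\|_{L^\infty}+\|D^2\tilde g\|_{L^\infty})$.

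\textbf{Localized $W^{2,p}$ estimate.} Choose a cutoff $\eta$ equal to $1$ on $B_1$ and supported in $B_{3/2}$, and fix a $C^{1,1}$ subdomain $D$ with $\widetilde U_{3/2}\subset D\subset\widetilde U_2$. Apply the Agmon--Douglis--Nirenberg estimate with zero Dirichlet data on $\partial D$ to $\eta(w-c)$. Here the constant $c$ is taken to be $0$, since $w$ vanishes on the flat-like boundary portion and Poincaré therefore applies. Iterating the interior/boundary $W^{2,p}$ estimates with shrinking cutoffs (a standard bootstrap $L^2\to L^{p_1}\to\cdots$) gives
\[
\|w\|_{W^{2,p}(\widetilde U_1)}\le C\Bigl(\|\tilde F-\Delta G\|_{L^p(\widetilde U_2)}+\|w\|_{L^2(\widetilde U_{3/2})}\Bigr).
\]
The term $\|w\|_{L^2}$ is bounded via Poincaré, because $w=0$ on $\widetilde\Gamma$, by $\|\nabla w\|_{L^2}\le\|\nabla v\|_{L^2}+C\|D\tilde g\|_{L^\infty}$.

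There is a point to handle here. The statement has $\|\nabla u\|_{L^2(U_r)}$ on the right rather than $U_{2r}$. I would first prove the estimate with the $L^2$ term taken on $U_{2r}$, and then recover the stated form by applying that result on $U_{r/2}$ centered at nearby boundary points. Interior points are covered by interior estimates on balls of comparable size, and a covering argument finishes this step. Alternatively, I would simply accept the version on the larger ball if it is what the later applications use. \textbf{This bookkeeping of radii, together with the uniformity of the ADN constants under the rescaled boundary charts, is the main obstacle;} the rest is standard.

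\textbf{Conclusion.} Since $p>3$, Morrey's embedding gives
\[
\|\nabla w\|_{L^\infty(\widetilde U_1)}\le C\|w\|_{W^{2,p}(\widetilde U_1)}.
\]
Combining this with $\|\nabla G\|_{L^\infty}\le C\|D\tilde g\|_{L^\infty}$ yields the $r=1$ estimate. Scaling back as in the first step gives the lemma.
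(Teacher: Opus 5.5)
Your core argument (rescaling to $r=1$, subtracting an extension of the boundary data, a localized boundary $W^{2,p}$ estimate, then Morrey's embedding for $p>3$) is the standard way to prove the estimate. The paper does not prove it; it simply cites \cite[Lemma 11]{NZ13}. Your scaling bookkeeping is correct.

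\textbf{The covering step fails.} Your argument yields the estimate with $\|\nabla u\|_{L^2(U_{2r}(x_0))}$ on the right. You propose to recover the literal form, with $\|\nabla u\|_{L^2(U_r(x_0))}$, by a covering argument, and that cannot work. Points of $U_r(x_0)$ near $\partial B_r(x_0)$ have no neighbourhood of comparable size inside $U_r(x_0)$.

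In fact the literal statement is false. Take $U$ flat near $x_0=0$, i.e.\ $U_{2r}=B_{2r}\cap\{x_3>0\}$, and let $u_k:=\mathrm{Im}\,(x_1+ix_3)^k$. This is harmonic and vanishes on $\{x_3=0\}$, so $F=0$ and $g=0$. Then $|\nabla u_k|=k(x_1^2+x_3^2)^{(k-1)/2}$, so $\|\nabla u_k\|_{L^\infty(U_r)}=kr^{k-1}$, whereas
\begin{equation*}
\|\nabla u_k\|_{L^2(U_r)}^2=\frac{\pi k}{2}\,r^{2k+1}\int_{-1}^{1}(1-t^2)^k\,dt\sim\frac{\pi^{3/2}}{2}\,k^{1/2}r^{2k+1}.
\end{equation*}
Hence the left-hand side exceeds $r^{-3/2}\|\nabla u_k\|_{L^2(U_r)}$ by a factor of order $k^{3/4}$.

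\textbf{What to do instead.} Commit to your fallback: state and prove the lemma with $U_{2r}(x_0)$ in the $L^2$ term, which is presumably the intended statement. This is also all the paper ever uses:
\begin{itemize}
\item in the proof of Proposition~\ref{boundary1}, the $L^\infty$ norm is taken on $U_{2r/3}$ and the $L^2$ norm on $U_{3r/4}$;
\item in Claim~\ref{claim2}, the $L^2$ norm is over $U_{5s}(x_1)$.
\end{itemize}
A covering argument is legitimate only for passing from enlargement factor $2$ to an arbitrary fixed factor larger than $1$, as in that $2r/3$ versus $3r/4$ application. There you use boundary estimates centred at nearest boundary points and interior gradient estimates elsewhere.

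\textbf{A smaller repair in your extension step.} First replace $u$ and $g$ by $u-g(x_0)$ and $g-g(x_0)$; this changes neither side of the inequality. Without it, derivatives of your cutoff put $\|g\|_{L^\infty}$ into $\nabla G$, into $\Delta G$, and into the Poincar\'e bound for $w$. No such term is available on the right-hand side. In particular, your bound $\|\nabla G\|_{L^\infty}\le C\|D\tilde g\|_{L^\infty}$ is false for a cut-off extension of a large constant. After normalizing, $|\tilde g|\le C\|D\tilde g\|_{L^\infty}$ near $0$, and these terms are absorbed.
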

\begin{proof}
This lemma is a special case of \cite[Lemma 11]{NZ13}.
\end{proof}

\begin{proof}[Proof of Proposition \ref{boundary1}]
After translation, we assume $x_0=0$. Let 
$$
u:=1-|\Q|^2\geq 0.
$$
Using \eqref{EulerLagrange}, a direct computation shows that in $U_r$
\be
-\Delta u=2\lda\(\f{1}{\sqrt{6}}|\Q|^4-\tr\Q^3\)-2\mu |\Q|^2u+2|\na\Q|^2\leq C_0(|\na\Q|^2+\lda),\label{Deltau}
\ee
for some absolute constant $C_0>0$. Let $w$ and $w_1$ solve the Dirichlet problems
\be
\begin{aligned}
-\Delta w&=C_0(|\na\Q|^2+\lda)\quad\text{in }U_r\quad\text{and}\quad w=u\text{ on }\pa(U_r),\\
-\Delta w_1&=C_0(|\na\Q|^2+\lda)\quad\text{in }U_r\quad\text{and}\quad w_1=0\text{ on }\pa(U_r).
\end{aligned}\label{eqveqw1}
\ee
Define $ w_2:=w-w_1 $. Then
\be
-\Delta w_2=0\quad\text{in }U_r\quad\text{and}\quad w_2=u\quad\text{on }\pa(U_r).\label{eqw2}
\ee
Since $\Q\in\Ss^4$ on $\Gamma_r^U$, we have $u=0$ on $\Gamma_r^U$, and therefore $w=0$ on $\Gamma_r^U$. Applying Lemma \ref{LempRe} with $p=4$ and using the fundamental theorem of calculus gives
\be
\begin{aligned}
w(y)&\leq C\|\na w\|_{L^{\ift}(U_{\f{2r}{3}})}\dist(y,\pa U)\\
&\leq C\(r^{-\f{3}{2}}\|\na w\|_{L^2(U_{\f{3r}{4}})}+r^{\f{1}{4}}\|\na\Q\|_{L^2(U_{\f{3r}{4}})}^{\f{1}{2}}\|\na\Q\|_{L^{\ift}(U_{\f{3r}{4}})}^{\f{3}{2}}+\lda r\)\dist(y,\pa U).
\end{aligned}\label{vyestimate}
\ee
Moreover, the $ L^2 $-estimate of $ w_1 $ implies
\be
\begin{aligned}
\|\na w_1\|_{L^2(U_r)}&\leq Cr\|(|\na\Q|^2+\lda)\|_{L^2(U_r)}\leq Cr(\|\na\Q\|_{L^2(U_r)}\|\na\Q\|_{L^{\ift}(U_r)}+\lda r^{\f{3}{2}}).\label{w1estimate}
\end{aligned}
\ee
From \eqref{eqw2}, the maximum principle yields
$$
\|w_2\|_{L^{\ift}(U_r)}\leq \|u\|_{L^{\ift}(U_r)}.
$$
Using Lemma \ref{Caccioppoli} and the assumption $|\Q|\in[\tfrac12,1]$ in $U_r$ gives
\be
\begin{aligned}
\|\na w_2\|_{L^2(U_{\f{3r}{4}})}&\leq Cr^{-1}\|w_2\|_{L^2(U_r)}\leq Cr^{\f{1}{2}}\|u\|_{L^{\ift}(U_r)}\\
&\leq Cr^{\f{1}{2}}\|(1-|\Q|^2)\|_{L^{\ift}(U_r)}\leq Cr^{\f{1}{2}}\|(1-|\Q|)\|_{L^{\ift}(U_r)}.
\end{aligned}\label{w2estimate}
\ee
It follows from \eqref{Deltau} and \eqref{eqveqw1} that
$$
-\Delta(u-w)\leq 0\quad\text{in }U_r,\quad\text{and}\quad u-w=0\quad\text{on }\pa(U_r).
$$
Testing this inequality with $\max\{0,u-w\}$ yields
$$
u(y)\le w(y)\quad\text{for any }y\in U_r.
$$
Since $1-|\Q|\le u$, we obtain
$$
1-|\Q(y)|\le w(y)\quad\text{for any }y\in U_r.
$$
Combining \eqref{vyestimate}, \eqref{w1estimate}, and \eqref{w2estimate} gives, for any $y\in U_r$,
\begin{align*}
|(\Q-\Q^*)(y)|&\leq Cr^{-1}\|(1-|\Q|)\|_{L^{\ift}(U_r)}\dist(y,\pa U)\\
&\quad\quad+Cr^{\f{1}{4}}\|\na\Q\|_{L^2(U_{\f{3r}{4}})}^{\f{1}{2}}\|\na\Q\|_{L^{\ift}(U_{\f{3r}{4}})}^{\f{3}{2}}\dist(y,\pa U)\\
&\quad\quad+C\(r^{-\f{1}{2}}\|\na\Q\|_{L^2(U_r)}\|\na\Q\|_{L^{\ift}(U_r)}+\lda r\)\dist(y,\pa U).
\end{align*}
Since $\Q\in\Ss^4$ on $\Gamma_r^U$, this implies \eqref{naQQstar}.
\end{proof}

\section{Interior regularity}

\subsection{Partial regularity} In this section, we focus on interior partial regularity results.

\begin{prop}\label{InteriorRegularity}
Let $ r\in(0,1] $ and $ x\in\R^3 $. Assume $ \Q\in C^{\ift}(B_{2r}(x),\Ss_0) $ solves \eqref{EulerLagrange}. There exists an absolute constant $ \delta\in(0,1) $ such that if $ |\Q|\in[\f{1}{2},1] $ in $ B_{2r}(x) $ and $ \Theta_{\lda,\mu}(\Q;x,r)<\delta $, then
$$
r^2\|e_{\lda,\mu}(\Q)\|_{L^{\ift}(B_r(x))}\leq C,
$$
where $ C>0 $ depends only on $ \lda $.
\end{prop}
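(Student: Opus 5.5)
We follow the classical Schoen-type argument, as adapted in \cite{MZ10}, using the three ingredients available in this setting: the pointwise bound $|\Q|\le 1$, the interior monotonicity formula of Proposition \ref{Mono}(1), and the Bochner-type inequality of Lemma \ref{Bochner}, which carries the extra term $C\lda^2$. After translating to $x=0$, it suffices to bound $\sup_{B_r}e_{\lda,\mu}(\Q)$ by $Cr^{-2}$.

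\textbf{Step 1: choice of an extremal point.} Set $e:=e_{\lda,\mu}(\Q)$ and consider the function
$$
\rho\mapsto(r-\rho)^2\sup_{B_\rho}e,\qquad \rho\in[0,r].
$$
It is continuous and vanishes at $\rho=r$, so it attains its maximum at some $\rho_0\in[0,r)$. Choose $x_0\in\ol B_{\rho_0}$ with $e(x_0)=\sup_{B_{\rho_0}}e=:e_0$, and put $\sigma_0:=\tfrac12(r-\rho_0)$. Since $B_{\sigma_0}(x_0)\subset B_{\rho_0+\sigma_0}$, maximality gives
$$
\sup_{B_{\sigma_0}(x_0)}e\le 4e_0 .
$$
It is then enough to show that $\sigma_0^2e_0\le C(\lda)$. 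Indeed, this gives
$$
(r-\rho)^2\sup_{B_\rho}e\le 4\sigma_0^2e_0\le C
$$
for every $\rho$, and letting $\rho\uparrow r$ along, say, $\rho=r/2$, and applying the same argument on slightly smaller balls yields the desired bound on $B_r$. Assume for contradiction that $\sigma_0^2e_0> K$, where $K$ is a large constant to be fixed.

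\textbf{Step 2: rescaling.} Let $t:=e_0^{-1/2}<\sigma_0/\sqrt K$ and define $\PP(y):=\Q(x_0+ty)$ on $B_{\sqrt K}$. Then $\PP$ solves \eqref{EulerLagrange} with the parameters $(t^2\lda,t^2\mu)$. Note that with the $\tfrac1r\int$ normalization the correct scaling is quadratic, $e_{\lda,\mu}\mapsto t^2e$; the statement $(r\lda,r\mu)$ in the introduction should be read in this sense. The rescaled energy $\tilde e:=t^2e(x_0+t\,\cdot)$ satisfies
$$
\tilde e(0)=1,\qquad \tilde e\le 4\ \text{ on } B_1,
$$
provided $K\ge1$. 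Since $|\PP|\in[\tfrac12,1]$, Lemma \ref{Bochner} applies to $\PP$, with $\lda$ replaced by $t^2\lda$:
$$
-\Delta\tilde e\le C(\tilde e^2+t^4\lda^2)\le 16C+C\lda^2r^4\le C_1(\lda)\quad\text{on } B_1,
$$
using $t\le r\le1$.

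\textbf{Step 3: mean-value inequality.} The function $\tilde e+\tfrac{C_1}{6}|y|^2$ is subharmonic on $B_1$, so
$$
1=\tilde e(0)\le C\int_{B_1}\tilde e+C_1 .
$$
Scaling back, and using that $B_t(x_0)\subset B_{2r}$ with $t<r$, monotonicity (Proposition \ref{Mono}(1)) about $x_0$ gives
$$
\int_{B_1}\tilde e=\f1t\int_{B_t(x_0)}e\le\Theta_{\lda,\mu}(\Q;x_0,r)\le C\delta .
$$
Here $\Theta(\Q;x_0,r)$ is controlled by $\Theta(\Q;0,2r)$, which we should take as the hypothesis; otherwise we shrink the radii so that the ball around $x_0$ of radius comparable to $r$ lies in the ball where smallness holds.

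\textbf{Main obstacle: the additive constant.} The delicate step is that the constant $C_1$ in Step 3 is not small, so the inequality $1\le C\delta+C_1$ gives no contradiction as it stands. The $16C$ part of $C_1$, coming from $\tilde e^2\le 16$, must be handled as in the usual Schoen argument: we apply the mean-value inequality on a small ball $B_s$, $s\ll1$, rather than on $B_1$. Subharmonicity of $\tilde e+\tfrac{C_1}{6}|y|^2$ then gives
$$
1\le Cs^{-3}\int_{B_s}\tilde e+C_1s^2\le Cs^{-2}\cdot\f1{st}\int_{B_{st}(x_0)}e+C_1s^2\le Cs^{-2}\delta+C_1s^2 .
$$
Fixing $s$ small so that $C_1s^2\le\tfrac14$, and then $\delta$ small so that $Cs^{-2}\delta\le\tfrac14$, yields a contradiction. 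The $\lda$-term is harmless precisely because $t^4\lda^2\le\lda^2$. Hence $\sigma_0^2e_0\le K$, with $K$ depending on $\lda$ only through $C_1$, and hence only on $\lda$. This proves the proposition.
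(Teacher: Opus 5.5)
Your scheme follows the paper's: extremal point, rescaling by $e_0^{-1/2}$ so that $\lambda\mapsto t^2\lambda$, Lemma \ref{Bochner}, then monotonicity. The one difference is the mean-value step. The paper applies the Moser-type Lemma \ref{Harnack} to $-\Delta w\le C(w+\alpha_\mu^{-2}\lambda^2)$ on $B_1$. You instead bound the whole right-hand side by a constant and use the mean-value inequality for $\tilde e+\frac{C_1}{6}|y|^2$ on a small ball $B_s$. That substitution is legitimate and more elementary.

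\textbf{Gap: the dependence of the constants.} Your $C_1=16C+C\lambda^2$ depends on $\lambda$. So $s$, fixed by $C_1s^2\le\frac14$, depends on $\lambda$, and then $\delta$, fixed by $Cs^{-2}\delta\le\frac14$, does too. The proposition instead asserts an absolute $\delta$, with only $C$ depending on $\lambda$. The estimate $t^4\lambda^2\le\lambda^2$ that you call harmless is exactly where this is lost. Indeed, nothing in your contradiction uses $K$ beyond $K\ge1$. What you actually prove is $\sigma_0^2e_0\le 1$ under a $\lambda$-dependent smallness threshold, so your closing claim that $K$ carries the $\lambda$-dependence is not what the argument shows.

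\textbf{Fix.} Use the paper's dichotomy: either $\alpha_\mu\le C(\lambda)$ and the bound is immediate, or $\alpha_\mu^{-2}\lambda^2$ is small. In your notation this means actually using $K$. Since $\sigma_0\le\frac r2\le\frac12$, the assumption $\sigma_0^2e_0>K$ forces $e_0>4K$. Hence $t^4\lambda^2=\lambda^2e_0^{-2}<\lambda^2/(16K^2)\le\frac1{16}$ once $K\ge\max\{1,\lambda\}$. Then $-\Delta\tilde e\le C(16+\frac1{16})$ on $B_1$ with an absolute constant. So $s$ and $\delta$ become absolute, and the $\lambda$-dependence moves into $K$, i.e.\ into the final constant $C$.

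\textbf{A smaller point.} The weight $(r-\rho)^2$ only bounds $e$ on $B_{r/2}$, and ``applying the same argument on slightly smaller balls'' does not reach $B_r$. Instead, run the argument on $B_{r/2}(y)$ for each $y\in B_r$. This works because, by monotonicity, $\Theta_{\lda,\mu}(\Q;z,\rho)\le 4\,\Theta_{\lda,\mu}(\Q;0,2r)$ for all $z\in B_{3r/2}$ and $\rho\le\frac r2$. Here the smallness hypothesis is taken at radius $2r$, as both you and the paper effectively do.
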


An essential tool for the proof is the following Harnack-type inequality:

\begin{lem}\label{Harnack}
Let $ c_0,r>0 $, and $ f\in L^{\ift}(B_r) $. Assume that $ u\in H^1(B_r) $ is a subsolution of the equation $ -\Delta u+c_0u=f $, i.e., for any $ 0\leq\vp\in C_0^1(B_r) $,
$$
\int_{B_r}(\na u\cdot\na\vp+c_0u\vp)\leq\int_{B_r}f\vp.
$$
Then the positive part $ u^+:=\max\{u,0\}\in L^{\ift}(B_r) $, and in particular
$$
\sup_{B_{\f{r}{2}}} u^+\leq C\(\f{1}{r^3}\int_{B_r}u^++r\|f\|_{L^{\ift}(B_r)}^2\),
$$
where $ C>0 $ depends only on $ c_0 $.
\end{lem}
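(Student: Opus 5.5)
The plan is to reduce the statement to the mean value inequality for subharmonic functions. First I will show that $u^+$ is itself a weak subsolution, namely
$$
\int_{B_r}\big(\na u^+\cdot\na\vp+c_0u^+\vp\big)\leq\int_{B_r}|f|\vp\qquad\text{for all }0\leq\vp\in C_0^1(B_r).
$$
This is Kato's inequality. To prove it, I test the hypothesis with $\vp\,H_\varepsilon(u)$, where $H_\varepsilon(t)=\min\{t^+/\varepsilon,1\}$ is a Lipschitz, nondecreasing approximation of $\chi_{\{u>0\}}$. The term $\int H_\varepsilon'(u)|\na u|^2\vp$ has a good sign, so it can be dropped. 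Letting $\varepsilon\to0^+$ gives the displayed inequality with $f\chi_{\{u>0\}}\leq|f|$ on the right-hand side.

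Write $F:=\|f\|_{L^\infty(B_r)}$. Since $c_0u^+\geq0$, we get $-\Delta u^+\leq F$ weakly. For a fixed $y\in B_{r/2}$, the function
$$
w(x):=u^+(x)+\f{F}{6}|x-y|^2
$$
is then weakly subharmonic in $B_r$, because $\Delta\big(\f{F}{6}|x-y|^2\big)=F$. I will use the weak mean value inequality for $H^1$ subharmonic functions. This can be proved by mollification, since a mollified subharmonic function is smooth and subharmonic, or by the Moser iteration in \cite[Theorem 8.17]{GT}-type form. Applied on $B_{r/2}(y)\subset B_r$, it gives
$$
u^+(y)\leq w(y)\leq \f{C}{r^3}\int_{B_{r/2}(y)}w\leq\f{C}{r^3}\int_{B_r}u^++Cr^2F .
$$
This shows in particular that $u^+\in L^\infty(B_{r/2})$. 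Applying the same argument on slightly smaller concentric balls gives $u^+\in L^\infty_{\loc}(B_r)$, which is what is needed for the argument.

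The zero-order term gives a second, competing bound. The function $v:=u^+-F/c_0$ satisfies $-\Delta v+c_0v\leq0$ weakly. Repeating the Kato step, $v^+$ is subharmonic, and therefore
$$
\sup_{B_{r/2}}u^+\leq \f{F}{c_0}+\f{C}{r^3}\int_{B_r}u^+ .
$$
So the $f$-contribution to $\sup_{B_{r/2}}u^+$ is bounded by $C\min\{r^2F,F/c_0\}$.

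The main obstacle is converting this into the precise form $r\|f\|_{L^\infty}^2$ stated in the lemma. When $F\geq 1/r$ we have $r^2F\leq r^3F^2\leq rF^2$ if $r\leq1$, which is the regime relevant to Proposition \ref{InteriorRegularity}. When $F$ is small relative to $1/r$, the bound $r^2F$ is not directly controlled by $rF^2$. In that case my first attempt would be to absorb this contribution into the $r^{-3}\int_{B_r}u^+$ term by an interpolation or iteration argument over radii between $r/2$ and $r$. Failing that, I would use Young's inequality $r^2F\leq \tfrac12(rF^2+r^3)$ together with $r\leq1$, and accept an additional constant that is harmless in the application, where $f$ is of order $\lda^2+e_{\lda,\mu}^2$. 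I expect this bookkeeping of the $f$-term, rather than the subharmonic argument, to be the delicate point.
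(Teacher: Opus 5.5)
Your core argument is correct. The Kato step (testing with $\varphi H_\varepsilon(u)$ and discarding $\int\varphi H_\varepsilon'(u)|\nabla u|^2\ge 0$), the comparison function $u^+ + \frac{F}{6}|x-y|^2$, and the mean value inequality on $B_{r/2}(y)\subset B_r$ together prove
\begin{equation*}
\sup_{B_{r/2}}u^+\le C\Big(\frac{1}{r^3}\int_{B_r}u^+ +\min\big\{r^2\|f\|_{L^\infty(B_r)},\,c_0^{-1}\|f\|_{L^\infty(B_r)}\big\}\Big)
\end{equation*}
with $C$ absolute, and also $u^+\in L^\infty_{\mathrm{loc}}(B_r)$. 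This is what the paper's proof actually gives once the scaling is carried out. The paper rescales to $r=1$ and quotes \cite[Theorem 4.1]{HL11}, a Moser-iteration bound $\sup_{B_{1/2}}u^+\le C(\|u^+\|_{L^1(B_1)}+\|f\|_{L^q(B_1)})$. That bound is \emph{linear} in $f$ and asserts only local boundedness. Undoing $f\mapsto r^2f(r\,\cdot)$ gives $r^2\|f\|_{L^\infty(B_r)}$.

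The step you defer as bookkeeping cannot be done, because the printed inequality with $r\|f\|_{L^\infty(B_r)}^2$ is false. The hypotheses are invariant under $(u,f)\mapsto(tu,tf)$ for $t>0$. Apply the printed bound to $(tu,tf)$, divide by $t$ and let $t\to0^+$: this would give $\sup_{B_{r/2}}u^+\le Cr^{-3}\int_{B_r}u^+$ for every subsolution. Now take $r=1$, $u(x)=\phi(x/\eta)$ with $\phi\in C_c^\infty(B_1)$, $0\le\phi\le1$, $\phi(0)=1$, and $f:=-\Delta u+c_0u$. The left side is $1$, while $\int_{B_1}u^+=O(\eta^3)$.

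So no interpolation over radii can absorb the $r^2F$ term into the integral term. The Young's-inequality fallback proves a different statement, and the additive $Cr^3$ it introduces is not harmless at the unit scale where the lemma is used in the proof of Proposition \ref{InteriorRegularity}. There the contradiction $1=w_\mu(0)\le C_1(\int_{B_1}w_\mu+\cdots)$ needs every term on the right to be small. The correct conclusion is that the lemma should read $r^2\|f\|_{L^\infty(B_r)}$, with $L^\infty_{\mathrm{loc}}$ in place of $L^\infty$. With this correction the applications are unaffected: for example, $\lambda^4\alpha_\mu^{-4}$ there becomes $C\lambda^2\alpha_\mu^{-2}$, which still tends to $0$ as $\alpha_\mu\to\infty$.

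Compared with the cited Moser iteration, your route is more elementary and even yields the extra $c_0^{-1}\|f\|$ alternative. However, it uses $c_0\ge0$ essentially, to drop $c_0u^+$ and reach subharmonicity, whereas \cite[Theorem 4.1]{HL11} allows a bounded zero-order coefficient of either sign. This matters because the paper actually applies the lemma to $-\Delta w_\mu\le C(w_\mu+\alpha_\mu^{-2}\lambda^2)$, whose zero-order coefficient is $-C<0$. Your method covers that case with one more step. Since $w_\mu\ge0$, the function $W(x,s):=w_\mu(x)\cosh(\sqrt{C}s)$ satisfies $-\Delta W\le C\alpha_\mu^{-2}\lambda^2\cosh(\sqrt{C}s)$ on $B_1\times\R$. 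Your comparison-plus-mean-value argument in $\R^4$ then gives $w_\mu(0)\le C'(\int_{B_1}w_\mu+\alpha_\mu^{-2}\lambda^2)$.
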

\begin{proof}
By scaling, we may assume $ r=1 $. The result follows from \cite[Theorem 4.1]{HL11}.
\end{proof}

\begin{proof}[Proof of Proposition \ref{InteriorRegularity}]
After a translation, we set $ x=0 $. Choose $ 0<r_1<2r $ to satisfy
\be
\sup_{0\leq\rho\leq 2r}\left\{(2r-\rho)^2\sup_{B_{\rho}}e_{\lda,\mu}(\Q)\right\}=(2r-r_1)^2\sup_{B_{r_1}}e_{\lda,\mu}(\Q).\label{defr1}
\ee
Let 
$$
\alpha_\mu := \sup_{B_{r_1}} e_{\lambda,\mu}(\mathbf{Q})= e_{\lambda,\mu}(\mathbf{Q})(x_1)
$$
for some $x_1 \in \overline{B}_{r_1}$. Let $C_0 = C_0(\lambda) > 4$ be a constant to be determined. If $\alpha_\mu (2r-r_1)^2 \leq C_0$, then \eqref{defr1} implies 
$$
r^2 \sup_{B_r} e_{\lambda,\mu}(\mathbf{Q}) \leq C_0,
$$ 
which completes the proof. Consequently, we assume
\be
r_{\mu}:=\al_{\mu}^{\f{1}{2}}(2r-r_1)>C_0^{\f{1}{2}}>2.\label{alldamugeq4}
\ee
For any $ |y|<1 $, we have
\be
|x_1+\al_{\mu}^{-\f{1}{2}}y|\leq r_1+\al_{\mu}^{-\f{1}{2}}\leq r+\f{r_1}{2}\leq 2r.\label{x1alldamu}
\ee
This allows us to define $ \mathbf{P}(y) := \mathbf{Q}(x_1 + \alpha_\mu^{-\f{1}{2}}y)$ in $B_1$. Since $\mathbf{Q}$ solves \eqref{EulerLagrange}, the rescaled tensor $\mathbf{P}$ satisfies
$$
\Delta\PP=\wt{\lda}\(\f{1}{\sqrt{6}}|\PP|^2\PP+\f{1}{3}|\PP|^2\bI-\PP^2\)+\wt{\mu}(|\PP|^2-1)\PP,
$$
where $ \wt{\lda}:=\al_{\mu}^{-1}\lda $ and $ \wt{\mu}:=\al_{\mu}^{-1}\mu $. Define 
$$
w_\mu(y) := e_{\wt{\lambda},\wt{\mu}}(\mathbf{P})(y).
$$
By Lemma \ref{Bochner}, we have
\be
-\Delta w_{\mu}\leq C( w_{\mu}^2+\al_{\mu}^{-2}\lda^2).\label{Deltawtw}
\ee
It follows from \eqref{defr1}-\eqref{x1alldamu}, and the definition of $ \al_{\mu} $ that $ w_{\mu}(0)=1 $ and
\be
\sup_{B_1}w_{\mu}\leq\al_{\mu}^{-1}\sup_{B_{\f{2r+r_1}{2}}}e_{\lda,\mu}(\Q)\leq\al_{\mu}^{-1} (4\al_{\mu})\leq 4.\label{bound4}
\ee
Thus, \eqref{Deltawtw} reduces to 
$$
-\Delta w_\mu \leq C(w_\mu + \alpha_\mu^{-2} \lambda^2).
$$
Applying Lemma \ref{Harnack}, we obtain
$$
1=w_{\mu}(0)\leq C_1\(\int_{B_1} w_{\mu}+\f{\lda^4}{\al_{\mu}^4}\),
$$
where $ C_1>0 $ is an absolute constant. If $C_1 \alpha_\mu^{-2} \lambda^2 > \f{1}{2}$, then $\alpha_\mu$ is bounded by a constant depending on $\lambda$, which contradicts \eqref{alldamugeq4} for large $C_0$. Thus, we assume $C_1 \alpha_\mu^{-2} \lambda^2 \leq \f{1}{2}$, yielding
$$
\f{1}{2}\leq C_1\int_{B_1} w_{\mu}.
$$
Scaling back to $\mathbf{Q}$ and invoking the monotonicity formula (Proposition \ref{Mono}), we find
$$
\f{1}{2}\leq \f{C}{r_{\mu}}\int_{B_{\f{r_{\mu}}{2}}} w_{\mu}\leq C\Theta_{\lda,\mu}(\Q;x_1,r)\leq C_2\Theta_{\lda,\mu}(\Q;0,2r),
$$
where $ C_2>0 $ is also an absolute constant. By choosing $\delta$ such that $C_2 \delta <\f{1}{4}$, we reach a contradiction. This concludes the proof.
\end{proof}

\subsection{Higher order regularity} This subsection addresses interior higher-order regularity, assuming the boundedness of the energy density $e_{\lambda,\mu}(\mathbf{Q})$. We follow the approach established in \cite[Section 6]{NZ13}.

\begin{prop}\label{Ckestimate}
Let $\lambda, \mu > 0$, $0 < r \leq 1$, and $x_0 \in \mathbb{R}^3$. Let $\mathbf{Q} \in C^\infty(B_r(x_0), \mathcal{S}_0)$ be a solution of \eqref{EulerLagrange} satisfying $|\mathbf{Q}| \leq 1$ and
\be
\|(\mu(1-|\Q|^2)+|\na\Q|)\|_{L^{\ift}(B_{r}(x_0))}\leq C_0.\label{0deriva}
\ee
Then for any $j \in \mathbb{Z}_{\geq 0}$ and $0 < \rho < r$, we have
\be
\|(|D^j(\mu(1-|\Q|^2))|+|D^{j+1}\Q|)\|_{L^{\ift}(B_{\rho})}\leq C(1+(r-\rho)^{-j}),\label{regimprove}
\ee
where $ C>0 $ is a constant depending only on $ C_0,j $ and $ \lda $.
\end{prop}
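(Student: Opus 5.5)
We argue by induction on $j$, following \cite[Section 6]{NZ13}. The case $j=0$ is exactly \eqref{0deriva}. Set $u:=1-|\Q|^2$ and $v:=\mu u$. The key observation is that $v$ solves an elliptic equation with a large zeroth-order coefficient of the good sign. From the computation \eqref{Deltau},
$$
-\Delta v+2\mu|\Q|^2 v=2\mu|\na\Q|^2+2\lda\mu\Big(\f{1}{\sqrt6}|\Q|^4-\tr\Q^3\Big).
$$
Meanwhile $\Q$ solves $\Delta\Q=\lda G(\Q)-v\Q$, where $G(\Q)=\f{1}{\sqrt6}|\Q|^2\Q+\f13|\Q|^2\bI-\Q^2$ is a polynomial. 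Once $v$ is controlled in $C^j$, the $\Q$ equation is a Poisson equation with right-hand side bounded in $C^j$, which gives $D^{j+1}\Q$ (and in fact $D^{j+2}\Q$ in $L^p$ or Hölder spaces) by Schauder/$L^p$ theory on shrinking balls. The scaling $(r-\rho)^{-j}$ comes from applying interior estimates on nested balls $B_{\rho_k}$, with $\rho_k$ interpolating between $\rho$ and $r$ and spacing $(r-\rho)/(2j+2)$.

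The induction step works as follows. Assume $D^i v$ and $D^{i+1}\Q$ are bounded on $B_{\rho'}$ for $i\le j-1$. From the $\Q$ equation, interior $W^{2,p}$ and Schauder estimates give $D^{j}\Q\in C^{1,\al}$ with bounds. To control $D^j v$, first rewrite the right-hand side of the $v$ equation without the large factor $\mu$. Since $\mu(\f{1}{\sqrt6}|\Q|^4-\tr\Q^3)$ cannot be bounded using only $v$, I would instead use the identity $\mu|\na\Q|^2$ combined with $\mu\,\Q:\na\Q=-\f12\na v$. Decompose $\na\Q=\na\Q^*|\Q|+\Q^*\na|\Q|$, so that the radial component is $O(|\na v|/\mu)$ and harmless. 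This leaves the equation
$$
-\f{1}{\mu}\Delta v+2|\Q|^2v=2|\na\Q|^2+\lda(\cdots)\quad\text{(after dividing by }\mu).
$$
This is a singularly perturbed equation, and the maximum principle gives the $L^\infty$ bound of $v$ by the right-hand side, uniformly in $\mu$. This bound is consistent with \eqref{0deriva}.

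Differentiate this divided equation $j$ times. The function $D^j v$ satisfies $-\mu^{-1}\Delta D^jv+2|\Q|^2D^jv=D^j(\text{RHS})-\sum_{0<i\le j}\binom{j}{i}D^i(2|\Q|^2)D^{j-i}v$. By the inductive hypothesis and the bound on $D^{j+1}\Q$ just obtained, the right-hand side is bounded by $C(1+(r-\rho')^{-j})$. Multiply by a cutoff $\eta$ supported in $B_{\rho'}$ with $|\na\eta|\lesssim(r-\rho)^{-1}$, and apply the maximum principle to $\eta^2|D^jv|^2$ (the Bernstein trick). This works because $|\Q|^2\ge$ some positive constant: since $v\le C_0$ and $|\Q|\le1$, we have $|\Q|^2\ge 1-C_0/\mu\ge\f12$ for $\mu$ large, and for bounded $\mu$ the result is classical. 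The cutoff error terms carry a factor $\mu^{-1}$ and are therefore benign. This yields $\|D^jv\|_{L^\infty(B_\rho)}\le C(1+(r-\rho)^{-j})$, and feeding it back into the $\Q$ equation gives $D^{j+1}\Q$ at the next level, closing the induction.

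The main obstacle is the term $\lda\mu(\f{1}{\sqrt6}|\Q|^4-\tr\Q^3)$. After division by $\mu$ it becomes a bounded polynomial in $\Q$, so it is only a problem if one differentiates the undivided equation. The real difficulty is ensuring that every term produced by differentiating $|\na\Q|^2$ requires only $D^{j+1}\Q$, never $D^{j+2}\Q$. The term $D^j|\na\Q|^2$ involves $D^{j+1}\Q$, which is why the order of the steps matters: first obtain $D^{j+1}\Q$ from the Poisson equation using the $C^{j-1,\al}$ control of $v$, and only then estimate $D^jv$. To start this bootstrap, one needs $v\in C^{j-1,\al}$, and I would obtain it by the same Bernstein argument applied to difference quotients.
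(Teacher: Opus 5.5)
Your route differs from the paper's. The paper stays in $L^\infty$ throughout. Lemma~\ref{Intp} applied to $D^{j+1}\mathbf{Q}$ gives, through its square-root structure, a small factor $\delta$ in front of $\|D^{j+1}u_\mu\|$ (see \eqref{QE1}). Lemma~\ref{Intp} applied to $D^j u_\mu$, using the \emph{undivided} equation, gives the crude bound \eqref{umupre}, $\|D^{j+1}u_\mu\|_{L^\infty(B_s)}\le C(\mu^{1/2}+(t-s)^{-1})D_\mu(t,j)$. Lemma~\ref{corau} applied to the divided, constant-coefficient equation $-\mu^{-1}\Delta u_\mu+2u_\mu=g_\mu$ then gives \eqref{trXz}. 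The coupling between $D^{j+1}u_\mu$ and $D^{j+2}\mathbf{Q}$ is removed by absorbing $\delta\Phi_\mu(t,j)$ along nested radii.

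You instead decouple by passing through H\"older spaces: $W^{2,p}$ gives $D^j\mathbf{Q}\in C^{0,\alpha}$; you then need a uniform $C^{0,\alpha}$ bound for $D^{j-1}v$; Schauder gives $D^{j+1}\mathbf{Q}\in L^\infty$; a maximum principle gives $D^jv$. That ordering is sound in principle. However, the first sentence of your inductive step is false as written: under your hypothesis the $\mathbf{Q}$-equation only yields $D^j\mathbf{Q}\in C^{0,\alpha}$, not $C^{1,\alpha}$. Everything therefore rests on the uniform H\"older bound for $D^{j-1}v$, which you only assert.

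That assertion is where the gap is, because the mechanism you give for the singular-perturbation maximum principle does not close. Take $-\mu^{-1}\Delta w+2|\mathbf{Q}|^2w=F$ and $\phi=\eta^2|w|^2$. After absorbing the cross term, the cutoff error is of size $\mu^{-1}(|\nabla\eta|^2+|\Delta\eta^2|)|w|^2$. This carries no factor $\eta^2$, so it is not controlled by $\max\phi$. The factor $\mu^{-1}$ makes it harmless only if you already know $|w|\lesssim\mu^{1/2}+(\text{dist})^{-1}$ on the support of $\nabla\eta$. That is exactly the size a boundary layer of width $\mu^{-1/2}$ can have, so it cannot be taken for granted. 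Equivalently, Lemma~\ref{corau} controls $w$ in the interior only up to a decaying multiple of $\sup|w|$ near the edge.

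For the $L^\infty$ bound on $D^jv$ you can repair this with the genuine Bernstein function $\eta^2|D^jv|^2+A|D^{j-1}v|^2$, $A\sim(r-\rho)^{-2}$. This works because $-\mu^{-1}\Delta(A|D^{j-1}v|^2)$ contributes $-2A\mu^{-1}|D^jv|^2$, which cancels the cutoff error. For difference quotients of $D^{j-1}v$ (your H\"older step) no such lower-order quantity is available when $j=1$, since $v$ is not a gradient. What is needed there is the preliminary $\mu$-dependent estimate the paper uses:
\begin{itemize}
\item Apply Lemma~\ref{Intp} to $D^{j-1}v$, whose Laplacian is $\mu$ times a quantity bounded by the inductive hypothesis. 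This gives $\|D^jv\|_{L^\infty(B_s)}\lesssim\mu^{1/2}+(t-s)^{-1}$.
\item Hence $|h|^{-\alpha}|D^{j-1}v(\cdot+h)-D^{j-1}v|$ obeys the same bound for $|h|\le 1$.
\item Apply Lemma~\ref{corau} with $L=\mu^{-1}$, $\gamma=\frac12$ to these translated differences, using the paper's constant-coefficient form $-\mu^{-1}\Delta v+2v=g_\mu$, which commutes with translations. The edge term becomes $\min\{1,\mu^{-1/2}(t-s)^{-1}\}(\mu^{1/2}+(t-s)^{-1})\le 2(t-s)^{-1}$.
\end{itemize}
With this ingredient (essentially \eqref{umupre}) added, your H\"older step and the rest of your induction go through. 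Without it, the step that breaks the circularity is unproved.
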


To prove the above proposition, we need the following auxiliary lemmas.

\begin{lem}[\cite{BBH93}, Lemma A.1]\label{Intp}
Let $U \subset \mathbb{R}^3$ be a bounded domain. If $u \in C^2(U)$ and $f \in L^\infty(U)$ satisfy $-\Delta u = f$, then for any $x \in U$,
$$
|\na u(x)|^2\leq C\(\|f\|_{L^{\ift}(U)}\|u\|_{L^{\ift}(U)}+\dist^{-2}(x,\pa U)\|u\|_{L^{\ift}(U)}^2\),
$$
where $ C>0 $ is an absolute constant.
\end{lem}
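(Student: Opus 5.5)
The plan is a local decomposition on a ball centered at $x$, followed by a choice of radius that balances the two resulting bounds. Fix $x\in U$ and write $d:=\dist(x,\pa U)$, $M:=\|u\|_{L^{\ift}(U)}$, $F:=\|f\|_{L^{\ift}(U)}$. For any $0<R<d$ we have $B_R(x)\subset U$. Since $u\in C^2(U)$, the function $f=-\Delta u$ is continuous there. On $B_R(x)$ I split $u=v+w$, where:
\begin{itemize}
\item $w$ solves $-\Delta w=f$ in $B_R(x)$ with $w=0$ on $\pa B_R(x)$, so that $w(y)=\int_{B_R(x)}G_R(y,z)f(z)\,\ud z$ with $G_R$ the Dirichlet Green's function of the ball;
\item $v:=u-w$ is harmonic in $B_R(x)$ and satisfies $v=u$ on $\pa B_R(x)$.
\end{itemize}

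For the inhomogeneous part, the explicit Green's function of the ball gives the standard bounds $0\le G_R(y,z)\le C|y-z|^{-1}$ and $|\na_yG_R(y,z)|\le C|y-z|^{-2}$. Integrating these over $B_R(x)$ yields
$$
|\na w(x)|\le C\int_{B_R(x)}|x-z|^{-2}F\,\ud z\le CRF,\qquad \|w\|_{L^{\ift}(B_R(x))}\le CR^2F.
$$
Alternatively, $|w|\le F(R^2-|y-x|^2)/6$ follows by comparison with a quadratic barrier.

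For the harmonic part, the maximum principle gives $\|v\|_{L^{\ift}(B_R(x))}\le\|u\|_{L^{\ift}(\pa B_R(x))}\le M$. The interior gradient estimate for harmonic functions then gives $|\na v(x)|\le 3M/R$. Combining the two pieces,
$$
|\na u(x)|\le C\left(\frac{M}{R}+RF\right)\qquad\text{for every }0<R<d,
$$
and by continuity also for $R=d$.

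The main (though mild) point is choosing $R$, which I do in two cases.
\begin{itemize}
\item If $F d^2\ge M$ and $F>0$, take $R:=(M/F)^{1/2}\le d$. This gives $|\na u(x)|\le C(MF)^{1/2}$.
\item Otherwise $Fd^2<M$, and I take $R=d$. This gives $|\na u(x)|\le C(M/d+dF)\le 2CM/d$.
\end{itemize}
In either case, squaring yields $|\na u(x)|^2\le C(FM+d^{-2}M^2)$ with an absolute constant $C$, which is the claim. No step here is a genuine obstacle. The only facts to double-check are the Green's function gradient bound $|\na_yG_R|\le C|y-z|^{-2}$ uniformly up to the center, and that the constants are scale-invariant. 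The latter can be confirmed by first proving the estimate for $R=1$ and then rescaling.
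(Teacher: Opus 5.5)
Your proof is correct, and it is the standard argument behind the cited \cite[Lemma A.1]{BBH93}: the paper gives no proof of its own, and the cited result is obtained exactly this way, from a scale-$R$ bound $|\na u(x)|\le C(M/R+RF)$ for $R<d$ followed by the choice $R=\min\{d,(M/F)^{1/2}\}$. Two minor points are harmless. For merely continuous $f$ the Green potential $w$ is only $C^{1,\al}$, but $v=u-w$ is then weakly harmonic and hence harmonic by Weyl's lemma. Also, in your ``otherwise'' case the correct condition is $Fd^2\le M$ (this covers $F=0$), which still gives $M/d+dF\le 2M/d$.
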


\begin{lem}\label{corau}
Let $\gamma \geq 0$ and $r > 0$. Suppose $u \in W^{1,\infty}(B_r) \cap C^0(\overline{B}_r)$ and $F \in L^\infty(B_r)$ satisfy $-L\Delta u + c_0 u = F$ in the weak sense, with $L, c_0 > 0$. Then for any $x \in B_r$,
\begin{align*}
|u(x)|&\leq\f{1}{c_0}\|F\|_{L^{\ift}(B_r)}+C\min\left\{1,\f{L^{\ga}}{c_0^{\ga}(r-|x|)^{2\ga}}\right\}\|u\|_{L^{\ift}(\pa B_r)},
\end{align*}
where $ C>0 $ depends only on $ \ga $.
\end{lem}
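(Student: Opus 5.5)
The plan is to combine a constant supersolution that absorbs $F$ with an explicit radial barrier that carries the boundary data. We then apply the weak maximum principle for the coercive operator $-L\Delta+c_0$ twice, once to $u$ and once to $-u$.

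\emph{Rescaling.} Set $y=\sqrt{c_0/L}\,x$, $R:=r\sqrt{c_0/L}$ and $v(y):=u(x)$. Then $v\in W^{1,\ift}(B_R)\cap C^0(\ol{B}_R)$ and
$$
-\Delta v+v=\wt F:=c_0^{-1}F(\sqrt{L/c_0}\,y)
$$
weakly in $B_R$, with $\|\wt F\|_{L^{\ift}}=c_0^{-1}\|F\|_{L^{\ift}}$. The quantity $\f{L^{\ga}}{c_0^{\ga}(r-|x|)^{2\ga}}$ becomes $d^{-2\ga}$, where $d:=R-|y|$. So it suffices to prove
$$
|v(y)|\leq \|\wt F\|_{L^{\ift}}+C\min\{1,d^{-2\ga}\}M,\qquad M:=\|v\|_{L^{\ift}(\pa B_R)}.
$$

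\emph{Barrier.} Let $\phi$ be the radial solution of $-\Delta\phi+\phi=0$ in $B_R$ with $\phi=1$ on $\pa B_R$, namely
$$
\phi(y)=\f{R\sinh|y|}{|y|\sinh R},
$$
with $\phi(0)=R/\sinh R$. Consider
$$
z:=v-\|\wt F\|_{L^{\ift}}-M\phi .
$$
It satisfies $-\Delta z+z\leq 0$ weakly and $z\leq 0$ on $\pa B_R$, where continuity up to the boundary makes $z^+\in H_0^1(B_R)$. Testing with $z^+$ gives
$$
\int|\na z^+|^2+\int(z^+)^2\leq 0,
$$
so $z\leq 0$. Applying the same argument to $-v$ yields $|v|\leq \|\wt F\|_{L^{\ift}}+M\phi$. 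Since $0\leq\phi\leq1$ by the maximum principle, this already gives the ``$1$'' branch of the minimum.

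\emph{Decay of the barrier.} This is the only step with real content: showing $\phi(y)\leq C_{\ga}d^{-2\ga}$ for $d\geq1$. I expect to prove $\phi(y)\leq C(1+d)e^{-d}$ and treat two cases.
\begin{itemize}
\item If $|y|\geq1$, then $\sinh|y|/\sinh R\leq Ce^{|y|-R}$, and $R/|y|=1+d/|y|\leq 1+d$.
\item If $|y|<1$, then $\sinh|y|/|y|\leq C$ and $R/\sinh R\leq CRe^{-R}\leq C(1+d)e^{-d}$, because $R\leq d+1$.
\end{itemize}
Finally, $(1+d)e^{-d}\leq C_{\ga}d^{-2\ga}$ for $d\geq1$. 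For $d<1$ the bound $\phi\leq 1\leq d^{-2\ga}$ covers the remaining range, so $\phi\leq C_{\ga}\min\{1,d^{-2\ga}\}$ everywhere. Scaling back gives the lemma, with $C$ depending only on $\ga$.
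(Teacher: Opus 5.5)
Your proof is correct and follows essentially the paper's route. The paper simply cites \cite[Lemma 6]{NZ13} for the decaying bound and the maximum principle for the bound with $1$. Your rescaling, the explicit barrier $\phi(y)=R\sinh|y|/(|y|\sinh R)$, and the weak comparison argument (testing with $z^+\in H_0^1$) give a self-contained proof of both branches at once.

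One cosmetic slip: in the case $|y|<1$, the inequality $R/\sinh R\le CRe^{-R}$ fails as $R\to0$. It does hold for $R\ge1$, and that is automatic in the only range $d\ge1$ where you use the decay.
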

\begin{proof}
It follows from \cite[Lemma 6]{NZ13} and the maximum principle.
\end{proof}

\begin{proof}[Proof of Proposition \ref{Ckestimate}]
Let $ u_{\mu}:=\mu(1-|\Q|^2) $. For $ 0<s\le r $ and $ j\in\Z_{\ge 0} $, define
$$
\begin{aligned}
d_{\mu}(s,j)&:=\left\|\(\sum_{\ell=0}^j|D^{\ell}u_{\mu}|+\sum_{\ell=0}^{j+1}|D^{\ell}\Q|\)\right\|_{L^{\ift}(B_s)},\\
D_{\mu}(s,j)&:=d_{\mu}(s,j)+\sum_{\substack{\ell_1+\cdots+\ell_p=j+1\\ p\ge 2,\;\ell_q\ge1}}\prod_{k=1}^pd_{\mu}(s,\ell_k),\\
\Phi_{\mu}(s,j)&:=\|(|D^{j+1}u_{\mu}|+|D^{j+2}\Q|)\|_{L^{\ift}(B_{s})}.
\end{aligned}
$$
From \eqref{EulerLagrange} and \eqref{Deltau} we obtain
\begin{align}
\Delta\Q&=\lda\(\f{1}{\sqrt{6}}|\Q|^2\Q+\f{1}{3}|\Q|^2\bI-\Q^2\)-u_{\mu}\Q,\label{DeltaQeq}\\
\mu^{-1}\Delta u_{\mu}&=2\lda\(\tr\Q^3-\f{1}{\sqrt{6}}|\Q|^4\)+2|\Q|^2 u_{\mu}-2|\na\Q|^2.\label{Deltaumu}
\end{align}
Applying $D^{j+1}$ to \eqref{DeltaQeq} yields, for any $0<t\le r$,
$$
\|\Delta(D^{j+1}\Q)\|_{L^{\ift}(B_t)}\leq C(\|D^{j+1}u_{\mu}\|_{L^{\ift}(B_{t})}+D_{\mu}(t,j)).
$$
By Lemma \ref{Intp}, for any $0<\delta<1$ and $0<s<t\le r$,
\be
\begin{aligned}
\|D^{j+2}\Q\|_{L^{\ift}(B_s)}&\leq C(s-t)^{-1}\|D^{j+1}\Q\|_{L^{\ift}(B_t)}+C\|\Delta(D^{j+1}\Q)\|_{L^{\ift}(B_t)}^{\f{1}{2}}\|D^{j+1}\Q\|_{L^{\ift}(B_t)}^{\f{1}{2}}\\
&\leq C(\delta^{-1}+(t-s)^{-1})D_{\mu}(t,j)+\delta\|D^{j+1}u_{\mu}\|_{L^{\ift}(B_{t})}\\
&\leq C(\delta^{-1}+(t-s)^{-1})D_{\mu}(t,j)+\delta\Phi_{\mu}(t,j),
\end{aligned}\label{QE1}
\ee
where we also use the Cauchy-Schwarz inequality. Taking $ D^j $ for both sides of \eqref{Deltaumu} gives
$$
\|\Delta(D^ju_{\mu})\|_{L^{\ift}(B_{t})}\leq C\mu(\|D^ju_{\mu}\|_{L^{\ift}(B_{t})}+D_{\mu}(t,j))
$$
for any $ 0<t\leq r $. By applying Lemma \ref{Intp} again, for $ 0<s<t\leq r $,
\be
\begin{aligned}
\|D^{j+1}u_{\mu}\|_{L^{\ift}(B_s)}&\leq C(s-t)^{-1}\|D^ju_{\mu}\|_{L^{\ift}(B_t)}+C\|\Delta(D^ju_{\mu})\|_{L^{\ift}(B_t)}^{\f{1}{2}}\|D^ju_{\mu}\|_{L^{\ift}(B_t)}^{\f{1}{2}}\\
&\leq C(\mu^{\f{1}{2}}+(s-t)^{-1})D_{\mu}(t,j).
\end{aligned}\label{umupre}
\ee
We rewrite \eqref{Deltaumu} as
$$
-\mu^{-1}\Delta u_{\mu}+2u_{\mu}=2\lda\(\f{1}{\sqrt{6}}|\Q|^4-\tr\Q^3\)+2\mu^{-1}u_{\mu}^2+2|\na\Q|^2:=g_{\mu}.
$$
Using \eqref{0deriva}, \eqref{QE1}, and \eqref{umupre}, we obtain
$$
\|D^{j+1}g_{\mu}\|_{L^{\ift}(B_{\f{s+t}{2}})}\leq C(\delta^{-1}+(t-s)^{-1})D_{\mu}(t,j)+\delta \Phi_{\mu}(t,j)
$$
for any $ 0<\delta<1 $. Since
$$
-\mu^{-1}\Delta(D^{j+1}u_{\mu})+2D^{j+1}u_{\mu}=D^{j+1}g_{\mu},
$$
Lemma \ref{corau} together with \eqref{umupre}, applied with
$L=\mu^{-1}$ and $\gamma=\frac{1}{2}$, gives
\be
\begin{aligned}
\|D^{j+1}u_{\mu}\|_{L^{\ift}(B_{s})}&\leq C\|D^{j+1}g_{\mu}\|_{L^{\ift}(B_{\f{s+t}{2}})}+C\min\{1,\mu^{-\f{1}{2}}(t-s)^{-1}\}\|D^{j+1}u_{\mu}\|_{L^{\ift}(B_{\f{s+t}{2}})}\\
&\leq\delta\Phi_{\mu}(t,j)+C(\delta^{-1}+(t-s)^{-1})D_{\mu}(t,j).
\end{aligned}\label{trXz}
\ee
This, together with \eqref{QE1}, implies that for any $ 0<\delta<1 $ and $ 0<s<t\leq r $,
\be
\Phi_{\mu}(s,j)\leq\delta\Phi_{\mu}(t,j)+C(\delta^{-1}+(t-s)^{-1})D_{\mu}(t,j).\label{Phisj}
\ee
Choose $ \delta=\f{1}{2} $ and define
\begin{align*}
\rho_0=s,\quad\text{and}\quad\rho_{i+1}-\rho_i=\f{2^i}{3^{i+1}}(t-s).
\end{align*}
It follows from \eqref{Phisj} that
$$
\Phi_{\mu}(s,j)\leq\f{1}{2^i}\Phi_{\mu}(\rho_i,j)+C(1+(t-s)^{-1})D_{\mu}(t,j)\cdot\sum_{\ell=0}^{i-1}\(\f{3}{4}\)^{\ell}.
$$
Letting $ i\to+\ift $ gives
$$
\Phi_{\mu}(s,j)\leq C(1+(t-s)^{-1})D_{\mu}(t,j).
$$
Interpolation and \eqref{0deriva} imply
$$
d_{\mu}(s,j+1)\leq C(1+(t-s)^{-1})D_{\mu}(t,j).
$$
Therefore \eqref{regimprove} follows by induction on $j$.
\end{proof}

\section{Boundary regularity}\label{BoundaryPartialReg}

\subsection{Main properties} We study boundary regularity for solutions of \eqref{EulerLagrange}. Similar results appear in \cite[Section 5]{NZ13}, but our model requires more refined estimates.

\begin{prop}\label{smallreg2}
Let $\lambda,\mu>0$. Let $U$ be a bounded $C^3$ domain with parameters $M_{U,3}$, $r_{U,3}$, and $\omega_3$. There exists $0<\delta<1$, depending only on $M_{U,3}, r_{U,3}$, and $\omega_3$, such that the following holds. Suppose $ \Q\in C^{\ift}(U_{2r}(x_0),\Ss_0)\cap C^2(\ol{U}_{2r}(x_0),\Ss_0) $ solves \eqref{EulerLagrange} and satisfies $ \Q=\Q_b\in C^2(\Ga_{2r}^U(x_0),\Ss^4) $ on $\Ga_{2r}^U(x_0)$ for some $0<r<\frac12 r_{U,3}$. Assume
\be
\begin{gathered}
|\Q|\in(1-\delta,1]\text{ in }U_{2r}(x_0),\\
E_{\mu}:=\sup_{x\in U_{2r}(x_0)}\sup_{0<\rho<2r}\Theta_{\lda,\mu}(\Q;x,\rho)<\delta.
\end{gathered}\label{Qestimateass}
\ee
Then
\be
\|e_{\lda,\mu}(\Q)\|_{L^{\ift}(U_{\f{r}{2}}(x_0))}\leq C(b_{\mu}^2+r^{-2}),\label{eldamuprove}
\ee
where 
$$
b_{\mu}:=\|(|\na_{\pa U}\Q_b|+r|D_{\pa U}^2\Q_b|)\|_{L^{\ift}(\Ga_{2r}^U(x_0))},
$$
and $ C>0 $ depends only on $ \lda,M_{U,3},r_{U,3},\w_3 $.
\end{prop}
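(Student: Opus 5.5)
We will argue by a boundary version of the blow-up argument used for Proposition \ref{InteriorRegularity}, following \cite[Section 5]{NZ13} and \cite[Lemma 9]{MZ10}. After translating so that $x_0=0$, we choose $r_1\in(0,2r)$ maximizing $(2r-\rho)^2\sup_{U_\rho}e_{\lda,\mu}(\Q)$ over $0\le\rho\le 2r$. We set $\al_\mu:=\sup_{U_{r_1}}e_{\lda,\mu}(\Q)=e_{\lda,\mu}(\Q)(x_1)$ and $r_\mu:=\al_\mu^{1/2}(2r-r_1)$. If $r_\mu^2\le C_0(1+b_\mu^2r^2)$, then \eqref{eldamuprove} follows immediately. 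So we assume $r_\mu$ is large and also that $\al_\mu\gg b_\mu^2+r^{-2}$. We then rescale $\PP(y):=\Q(x_1+\al_\mu^{-1/2}y)$ on the rescaled domain $\wt U:=\al_\mu^{1/2}(U-x_1)$. This $\PP$ solves \eqref{EulerLagrange} with $\wt\lda=\al_\mu^{-1}\lda$ and $\wt\mu=\al_\mu^{-1}\mu$. It also satisfies $w:=e_{\wt\lda,\wt\mu}(\PP)\le4$ on $\wt U\cap B_1$ and $w(0)=1$. The rescaled domain is flatter: its $C^3$ parameters improve, since $M_{\wt U,3}\le M_{U,3}$ and $r_{\wt U,3}=\al_\mu^{1/2}r_{U,3}$. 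The rescaled boundary data have $|\na_{\pa\wt U}\PP_b|+|D^2_{\pa\wt U}\PP_b|\le \al_\mu^{-1/2}b_\mu r^{-1}\cdot(\ldots)$, which is small by our assumption.

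We split according to $d:=\dist(0,\pa\wt U)$. If $d\ge\frac12$, we are in the interior case. Lemma \ref{Bochner} gives $-\Delta w\le C(w+\wt\lda^2)$ on $B_{1/2}$. Lemma \ref{Harnack} then yields $1\le C\int_{B_{1/2}}w+C\wt\lda^4$. Scaling back and using Proposition \ref{Mono}, this gives $1\le C E_\mu+C\lda^4\al_\mu^{-4}$, which is a contradiction for $\delta$ small and $\al_\mu$ large, exactly as in the interior proof.

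If $d<\frac12$, we need a pointwise bound for $w$ at boundary points that is small in terms of the energy. Here we use the Dirichlet condition: on $\Ga$ we have $|\PP|=1$, so $e=\frac12|\na\PP|^2$ there. The tangential derivatives are controlled by the rescaled $b_\mu$, which is small. For the normal derivative we split $\PP=\PP^*+(\PP-\PP^*)$ with $\PP^*=\PP/|\PP|$. For $\na(\PP-\PP^*)$ on the boundary, we apply Proposition \ref{boundary1} on a ball of size $\sim1$ centered at the nearest boundary point. Every term on the right of \eqref{naQQstar} is small. First, $1-|\PP|<\delta$. Second, $\|\na\PP\|_{L^2}^2\lesssim E_\mu$ by monotonicity, while $\|\na\PP\|_{L^\infty}\le C$ because $w\le4$. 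Third, the last term is $\wt\lda=\lda\al_\mu^{-1}$. For the $\Ss^4$-valued part $\PP^*$, we argue as in \cite{NZ13}. The normal derivative can be bounded using Lemma \ref{LempRe} applied to $\Delta\PP$ (with $p=4$), whose right-hand side is bounded thanks to \eqref{Deltau}. This is combined with the $L^2$ smallness of $\na\PP$. The result is $\sup_{\Ga\cap B_{1}}w\le \eta(\delta,\al_\mu^{-1},b_\mu\al_\mu^{-1/2})$ with $\eta\to0$.

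Knowing that $w$ is small on the boundary, we compare $w$ with the solution of $-\Delta v= C(v+\wt\lda^2)$ that has boundary values $\sup_\Ga w$. Equivalently, we apply a boundary Harnack/De Giorgi estimate to $(w-\sup_\Ga w)^+$, which vanishes on $\Ga$ and can be extended by zero. This gives $1=w(0)\le C\int_{\wt U\cap B_1}w+\eta+C\wt\lda^2$. Scaling back and using part (2) of Proposition \ref{Mono} gives $1\le C E_\mu+C r\cdot(\ldots)+o(1)$, a contradiction for $\delta$ small. The main obstacle is this boundary control of $w$, in particular of the normal derivative $|\pa_\nu\PP|$ on $\Ga$. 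Proposition \ref{boundary1} handles only the modulus part, so the direction part requires the $W^{2,p}$ boundary estimate of Lemma \ref{LempRe}. That estimate has to be used with the $\mu$-term $u_\mu\PP$ treated via the $L^\infty$ bound $\mu(1-|\PP|^2)\lesssim w$ (available through \eqref{Deltau} and the comparison argument in Proposition \ref{boundary1}), and this is where the smallness must be carefully extracted.
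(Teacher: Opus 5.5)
Your skeleton matches the paper's. You blow up at a near-maximal point and treat the interior alternative with Lemma \ref{Harnack}. In the boundary alternative you prove smallness of $w$ on $\Ga$, split $\PP=\PP^*+(\PP-\PP^*)$ with Proposition \ref{boundary1} handling $\na(\PP-\PP^*)$, extend across $\Ga$, and apply Harnack. Normalizing via $r_\mu^2>C_0(1+b_\mu^2r^2)$ instead of subtracting $C_0b_\mu^2$ is fine.

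The gap is the step you flag yourself: smallness of the normal derivative of $\PP^*$ on $\Ga$. You feed Lemma \ref{LempRe} with $\Delta\PP$ and control the penalty term $\wt\mu(1-|\PP|^2)\PP$ by ``$\wt\mu(1-|\PP|^2)\lesssim w$''. That bound is false. The density $w$ only controls $\f{\wt\mu}{4}(1-|\PP|^2)^2$, so $w\le 4$ gives only $\wt\mu(1-|\PP|^2)\le 4\wt\mu^{1/2}$, which is unbounded because $\wt\mu=\mu/\al_\mu$ can be arbitrarily large. Nor does the comparison argument of Proposition \ref{boundary1} help: it discards the absorption term $-2\wt\mu|\PP|^2u$ in \eqref{Deltau}, so it gives only $1-|\PP|^2\lesssim\dist(\cdot,\pa\wt U)$, with no gain in $\wt\mu$. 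An $L^\ift$ bound on $\wt\mu(1-|\PP|^2)$ does hold, but it needs that absorption term and a penalized-equation estimate, namely Lemma \ref{boundaryconpa} as used in the proof of Proposition \ref{LiftQNll1}. Even granting it, the term $C\rho^{1/4}\|\Delta\PP\|_{L^4(\wt U\cap B_{2\rho})}\approx C\rho\|\Delta\PP\|_{L^\ift}$ in Lemma \ref{LempRe} is of order one on balls of unit size, which is where you apply the estimate. So your $\eta$ does not tend to $0$.

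The missing idea is to apply Lemma \ref{LempRe} to $\PP^*$ itself rather than to $\PP$. By \eqref{DeltaQstares}, $\Delta\PP^*$ equals $|\PP|^{-1}\mathbb{P}_{\PP^*}(\Delta\PP)$ plus terms quadratic in $\na\PP$. The penalty term $\wt\mu(|\PP|^2-1)\PP$ is parallel to $\PP$, so the tangential projection $\mathbb{P}_{\PP^*}$ annihilates it, giving $|\Delta\PP^*|\le C(|\na\PP|^2+\wt\lda)$ with no dependence on $\wt\mu$. The small-energy hypothesis gives $\|\na\PP\|_{L^2(\wt U\cap B_{5\rho})}^2\le C\rho\delta$, and $w\le 4$ gives $\|\na\PP\|_{L^\ift}\le C$. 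Hence $\rho^{1/4}\|\Delta\PP^*\|_{L^4}\le C\rho^{1/4}\|\na\PP\|_{L^2}^{1/2}\|\na\PP\|_{L^\ift}^{3/2}+C\wt\lda\rho\le C\rho^{1/2}\delta^{1/4}+C\wt\lda\rho$, which is small already at unit scale. This is \eqref{Boundaryuse21}, and it supplies exactly the boundary smallness your comparison/Harnack step needs.

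Alternatively, your route can be repaired with two ingredients, neither of which is in your proposal. First, take the $L^\ift$ bound on $\wt\mu(1-|\PP|^2)$ from Lemma \ref{boundaryconpa}. Second, apply Lemma \ref{LempRe} around each boundary point at the small scale $\rho=\delta^{1/4}$, which balances $\rho^{-1}\delta^{1/2}$ against $\rho$.

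A minor slip: on $\Ga$ one has $w=\f12|\na\PP|^2+\wt\lda W(\PP)$, not $\f12|\na\PP|^2$. The extra term is $O(\wt\lda)$ and harmless.
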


The following proposition improves regularity near the boundary.

\begin{prop}\label{LiftQNll1}
Let $\lambda,\mu>0$ and let $U\subset\mathbb R^3$ be a bounded $C^1$ domain with parameters $ M_{U,1},r_{U,1} $, and $ \w_1 $. Let $x_0\in\partial U$ and $0<r<\frac{1}{2} r_{U,1}$. Suppose $ \Q\in C^{\ift}(U_r(x_0),\Ss_0)\cap C^0(\ol{U}_r(x_0),\Ss_0) $ solves \eqref{EulerLagrange} and satisfies $ \Q\in\Ss^4 $ on $ \Ga_{2r}^U(x_0) $. Assume $ |\Q|\in[\f{1}{2},1] $ in $ U_{2r}(x_0) $. Then
$$
\|\Delta\Q\|_{L^{\ift}(B_r(x_0))}\leq C(\|\na\Q\|_{L^{\ift}(U_{2r}(x_0))}+\lda+r^{-2}),
$$
where $ C>0 $ depends only on $ \lda,M_{U,1},r_{U,1} $, and $ \w_1 $.
\end{prop}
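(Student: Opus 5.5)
The plan is to read the bound off the equation. Since $|\Q|\le 1$, writing $u_\mu:=\mu(1-|\Q|^2)\ge 0$, equation \eqref{DeltaQeq} gives
$$
|\Delta\Q|\le C\lda+u_\mu\quad\text{in }U_{2r}(x_0),
$$
so everything reduces to a sup bound for $u_\mu$ on $U_r(x_0)$, which is where $B_r(x_0)$ lives inside the domain of $\Q$. For this I will use the equation \eqref{Deltaumu} for $u_\mu$, written as
$$
-\mu^{-1}\Delta u_\mu+2|\Q|^2u_\mu=2\lda\Bigl(\f{1}{\sqrt6}|\Q|^4-\tr\Q^3\Bigr)+2|\na\Q|^2 .
$$
Since $|\Q|\ge\f12$, the zeroth-order coefficient satisfies $2|\Q|^2\ge \f12$. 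The right-hand side is bounded by $C(\lda+|\na\Q|^2)$, so $u_\mu$ is a subsolution of
$$
-\mu^{-1}\Delta v+\tfrac12 v=C\bigl(\lda+\|\na\Q\|^2_{L^\ift(U_{2r}(x_0))}\bigr).
$$

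Next I apply a comparison argument in the spirit of Lemma \ref{corau} (with $L=\mu^{-1}$, $c_0=\f12$, $\ga=1$), adapted to the domain $U_{2r}(x_0)$. On $\Ga_{2r}^U(x_0)$ we have $u_\mu=0$, because $\Q\in\Ss^4$ there. On the remaining boundary piece $\pa B_{2r}(x_0)\cap U$ we only know $0\le u_\mu\le\mu$.

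For the barrier I take the solution of the constant-coefficient problem on $B_{2r}(x_0)$ with boundary value $\mu$ on $\pa B_{2r}(x_0)$. For points of $B_r(x_0)$ this barrier is bounded by
$$
C\min\{1,\mu^{-1}r^{-2}\}\,\mu\le Cr^{-2}.
$$
The maximum principle on $U_{2r}(x_0)$ then gives
$$
\sup_{U_r(x_0)}u_\mu\le C\bigl(\lda+\|\na\Q\|_{L^\ift(U_{2r}(x_0))}^2+r^{-2}\bigr).
$$

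The main obstacle is the power of the gradient term: the comparison naturally produces $\|\na\Q\|^2_{L^\ift}$, not the linear $\|\na\Q\|_{L^\ift}$ appearing in the statement. The first thing I would try is to split the source as $2|\na\Q|^2=2|\na\Q|\cdot|\na\Q|$ and control one factor by the boundary geometry. Concretely, near $\Ga_{2r}^U(x_0)$, Proposition \ref{boundary1} controls the normal part of $\na\Q$ through $1-|\Q|$. If that fails, I would state the estimate in the regime in which it is used later, namely $\|\na\Q\|_{L^\ift(U_{2r}(x_0))}\lesssim r^{-1}$, as guaranteed by Proposition \ref{smallreg2}. There the quadratic and linear forms agree up to the $r^{-2}$ term, after enlarging $C$ by its dependence on $\lda$. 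This last step is the one I expect to require the most care.

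Finally, I will combine the sup bound for $u_\mu$ with $|\Delta\Q|\le C\lda+u_\mu$ to obtain the stated bound on $\|\Delta\Q\|_{L^\ift(B_r(x_0))}$, with $C$ depending only on $\lda,M_{U,1},r_{U,1},\w_1$.
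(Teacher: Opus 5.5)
Your comparison argument is correct. What it yields is
\[
\|\Delta\Q\|_{L^\ift(U_r(x_0))}\le C\bigl(\|\na\Q\|_{L^\ift(U_{2r}(x_0))}^2+\lda+r^{-2}\bigr),
\]
and this is exactly what the paper's own proof establishes. Its final line also carries the square, so the linear power of $\|\na\Q\|_{L^\ift}$ in the displayed statement is a misprint, not something you still owe.

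Your route is essentially the paper's: a $\mu$-uniform sup bound on $\mu(1-|\Q|^2)$ by a maximum-principle estimate for $-\mu^{-1}\Delta+c_0$, using zero data on $\Ga_{2r}^U(x_0)$, then read off from \eqref{EulerLagrange}. The paper works with $(1-|\Q|^2)^2$, absorbs the cross term $(|\na\Q|^2+\lda)(1-|\Q|^2)$ by Cauchy--Schwarz, applies Lemma~\ref{boundaryconpa} with $\ga=2$, and takes a square root. You instead use the linear equation \eqref{Deltaumu} for $u_\mu$ directly, with an explicit barrier on the full ball $B_{2r}(x_0)$. That is slightly more direct, and it spares you the boundary Lemma~\ref{boundaryconpa}, whose proof the paper omits.

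You should drop both of your proposed repairs.

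The first repair cannot succeed, and a bound linear in the gradient cannot be expected at all. The estimate is uniform in $\mu$, and the solutions it is applied to converge as $\mu\to+\ift$ to $\Ss^4$-valued maps. Every smooth $\Ss^4$-valued map satisfies $\Q:\Delta\Q=-|\na\Q|^2$, hence $|\Delta\Q|\ge|\na\Q|^2$. So configurations whose gradient has size $K$ on $U_{2r}(x_0)$ are incompatible with a linear bound once $K$ is large. In addition, Proposition~\ref{boundary1} needs a $C^3$ domain and $C^2$ data. It controls only the boundary trace of $\na(\Q-\Q^*)$, not the interior source $2|\na\Q|^2$.

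The second repair, adding the hypothesis $\|\na\Q\|_{L^\ift}\lesssim r^{-1}$, changes the statement and is unnecessary. In the proof of Theorem~\ref{main1} the proposition is invoked only after $e_{\lda,\mu}(\Q_{\lda,\mu})$ is bounded uniformly in $\mu$. There the quadratic bound already gives $\|\Delta\Q_{\lda,\mu}\|_{L^\ift}\le C$, and hence the $C^{1,\al}$ convergence.
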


\subsection{Proof of Proposition \ref{smallreg2}}

After a translation, we assume $x_0=0$. We prove that if $\delta\in(0,1)$ is sufficiently small and
\eqref{Qestimateass} holds, then
\be
M_{\mu}:=\sup_{0\leq\rho\leq r}\left\{(r-\rho)^2\sup_{U_\rho}(e_{\lda,\mu}(\Q)-C_0b_{\mu}^2)\right\}\leq C_0,\label{Mvaboun}
\ee
where $ C_0=C_0(\lda,M_{U,3},r_{U,3},\w_3)>0 $ will be chosen later. Once \eqref{Mvaboun} holds, \eqref{eldamuprove} follows immediately. Choose $0<r_1<r$ and $x_1\in\overline U_{r_1}$ such that
\begin{gather*}
M_{\mu}=(r-r_1)^2\sup_{U_{r_1}}(e_{\lda,\mu}(\Q)-C_0b_{\mu}^2),\\
e_{\lda,\mu}(\Q)(x_1)=\sup_{U_{r_1}}e_{\lda,\mu}(\Q):=\al_{\mu}.
\end{gather*}
Note that
\be
M_{\mu}=4r_2^2(\al_{\mu}-C_0b_{\mu}^2),\label{Mvavalu}
\ee
where $ r_2:=\f{r-r_1}{2} $. A direct computation gives
\be
\begin{aligned}
\sup_{U_{r_2}(x_1)}e_{\lda,\mu}(\Q)&\leq\sup_{U_{r_2+r_1}}e_{\lda,\mu}(\Q)\leq \sup_{U_{r_2+r_1}}(e_{\lda,\mu}(\Q)-C_0b_{\mu}^2)+C_0b_{\mu}^2\\
&=\f{(r-(r_2+r_1))^2\(\sup\limits_{U_{r_2+r_1}}e_{\lda,\mu}(\Q)-C_0b_{\mu}^2\)}{(r-(r_2+r_1))^2}+C_0b_{\mu}^2\\
&\leq\f{4r_2^2(\al_{\mu}-C_0b_{\mu}^2)}{r_2^2} +C_0b_{\mu}^2\leq 4\al_{\mu},\\
\end{aligned}\label{inftbounwva}
\ee
Define
$$
U':=\{y\in\R^3:x_1+\al_{\mu}^{-\f{1}{2}}y\in U\}\quad\text{and}\quad\rho_1:=\al_{\mu}^{\f{1}{2}}r_2.
$$
For $ y\in U_{\rho_1}':=U'\cap B_{\rho_1} $, let
$$
\PP(y):=\Q(x_1+\al_{\mu}^{-\f{1}{2}}y)\quad\text{and}\quad w_{\mu}(y):=e_{\wt{\lda},\wt{\mu}}(\PP)(y),
$$
where $ \wt{\lda}:=\al_{\mu}^{-1}\lda $ and $ \wt{\mu}:=\al_{\mu}^{-1}\mu $. Similar to \eqref{bound4}, we deduce from \eqref{inftbounwva} that
\be
\sup_{U_{\rho_1}'}w_{\mu}\leq 4.\label{wva4boun}
\ee
By Lemma \ref{Bochner}, we have
\be
-\Delta w_{\mu}\leq C(w_{\mu}+\al_{\mu}^{-2}\lda^2)\quad\text{in }U_{\rho_1}'.\label{HarBa}
\ee
Scaling back to $ e_{\lda,\mu}(\Q) $, it follows from \eqref{Qestimateass} that
\be
\sup_{0<\rho\leq\rho_1}\(\f{1}{\rho}\int_{U_{\rho}'}w_{\mu}\)<\delta.\label{Boundenwva}
\ee
Define
$$
\rho_2:=\min\left\{\dist(0,\pa U'),\f{\rho_1}{10}\right\}.
$$

\begin{claim}\label{claim1}
If $\delta>0$ is sufficiently small, then either $\rho_2 \le 1$ or \eqref{eldamuprove} holds.
\end{claim}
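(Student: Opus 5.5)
We argue by contradiction: suppose $\rho_2>1$, so that the interior ball $B_1\subset U'_{\rho_1}$ lies at distance more than $1$ from $\partial U'$. We then run exactly the interior argument from the proof of Proposition \ref{InteriorRegularity}, now inside $B_1$. By \eqref{wva4boun}, $0\le w_\mu\le 4$ on $B_1$. The Bochner-type inequality \eqref{HarBa} therefore becomes a linear subsolution inequality,
$$
-\Delta w_\mu+C w_\mu\le C(w_\mu+\al_\mu^{-2}\lda^2)+Cw_\mu\le C(8+\al_\mu^{-2}\lda^2)\quad\text{in }B_1 .
$$
It is more convenient to use it in the form $-\Delta w_\mu\le 4Cw_\mu + C\al_\mu^{-2}\lda^2$ as in the interior proof. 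Lemma \ref{Harnack} applies to this subsolution. Since $w_\mu(0)=e_{\lda,\mu}(\Q)(x_1)/\al_\mu=1$, it yields
$$
1=w_\mu(0)\le C_1\Big(\int_{B_1}w_\mu+\al_\mu^{-4}\lda^4\Big)
$$
with an absolute constant $C_1$.

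We now split into two cases according to the size of $\al_\mu$.

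\emph{Case 1:} $C_1\al_\mu^{-4}\lda^4>\frac12$. Then $\al_\mu\le C(\lda)$. By \eqref{Mvavalu} and $r_2\le r/2$,
$$
M_\mu\le 4r_2^2\al_\mu\le r^2C(\lda).
$$
Since $r<\frac12 r_{U,3}$, this is at most $C(\lda,r_{U,3})$, so \eqref{Mvaboun} holds once $C_0$ is chosen at least this large. Hence \eqref{eldamuprove} holds.

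\emph{Case 2:} $C_1\al_\mu^{-4}\lda^4\le\frac12$. Then $\frac12\le C_1\int_{B_1}w_\mu$. Because $B_1=U'_1$ (the ball lies in $U'$), the small-energy bound \eqref{Boundenwva} with $\rho=1\le\rho_1$ gives $\int_{B_1}w_\mu<\delta$. We may use $\rho=1$ here since $\rho_1\ge 10\rho_2>10$. This gives $\frac12<C_1\delta$, which is a contradiction once $\delta<(2C_1)^{-1}$. So Case 2 cannot occur when $\delta$ is small, and in the remaining case \eqref{eldamuprove} holds.

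The only point needing care is the verification that \eqref{Boundenwva} is valid at the unit scale, i.e.\ that it scales correctly. For $x=x_1$ and $\rho'=\al_\mu^{-1/2}\rho$ we have
$$
\frac1\rho\int_{U'_\rho}w_\mu=\al_\mu^{-1/2}\rho^{-1}\int_{U_{\rho'}(x_1)}e_{\lda,\mu}(\Q)=\Theta_{\lda,\mu}(\Q;x_1,\rho').
$$
Here $\rho'\le r_2<2r$ and $x_1\in U_{2r}$, so \eqref{Qestimateass} applies. A second check is that the constant in Lemma \ref{Harnack} is absolute: it depends only on the fixed $c_0$ coming from Lemma \ref{Bochner} and the bound $4$.

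I expect the main (mild) obstacle to be the bookkeeping in Case 1. There one needs $C_0$ to dominate $r^2C(\lda)$ uniformly in $r$, and also to be consistent with the choices of $C_0$ made later in the proof of \eqref{Mvaboun}. For this we use $r<\frac12 r_{U,3}$ and enlarge $C_0$ if needed.
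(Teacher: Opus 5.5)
Your proposal is correct and follows the paper's proof essentially verbatim. You assume $\rho_2>1$, so that $B_1\subset U'$ and $w_\mu(0)=1$, apply Lemma \ref{Harnack} to the linearized Bochner inequality, and split according to whether $\alpha_\mu^{-4}\lambda^4$ is large (giving $\alpha_\mu\le C(\lambda)$ and hence \eqref{Mvaboun}) or small (contradicting \eqref{Boundenwva} once $\delta$ is small). Two cosmetic points only: in your scaling check the middle factor should be $\alpha_\mu^{1/2}\rho^{-1}$ rather than $\alpha_\mu^{-1/2}\rho^{-1}$, though your identification with $\Theta_{\lambda,\mu}(\Q;x_1,\rho')$ is right; and your first displayed form, with the constant right-hand side $C(8+\alpha_\mu^{-2}\lambda^2)$, gives no smallness, so you are right to work with $-\Delta w_\mu\le 4Cw_\mu+C\alpha_\mu^{-2}\lambda^2$ instead.
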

\begin{proof}
Assume $\rho_2>1$. Then $\rho_1>10$ and $\dist(0,\partial U')>1$, which imply that $B_1\subset U'$. By the definition of $\alpha_\mu$, we have $w_\mu(0)=1$. Applying \eqref{Boundenwva} and Lemma \ref{Harnack}, we obtain
\be
1\leq C\(\int_{B_1}w_{\mu}+\f{\lda^4}{\al_{\mu}^4}\)\leq C'\(\delta+\f{\lda^4}{\al_{\mu}^4}\).\label{CintB1}
\ee
Here $C'>0$ is an absolute constant. If $ \f{C'\lda^4}{\al_{\mu}^4}>\f{1}{2} $, then $ \al_{\mu}\leq (2C')^{\f{1}{4}}\lda $. Since $ 0<r_2<\f{r}{2}<\f{r_{U,3}}{2} $, \eqref{Mvavalu} gives
$$
M_{\mu}=4r_2^2(\al_{\mu}-C_0b_{\mu}^2)\leq (2C')^{\f{1}{4}}\lda r_{U,3}^2,
$$
which implies \eqref{Mvaboun} and hence \eqref{eldamuprove}. On the other hand, if $ \f{C'\lda^4}{\al_{\mu}^4}<\f{1}{2} $, then \eqref{CintB1} yields $ \f{1}{2}\leq C'\delta $. Choosing $\delta>0$ such that $C'\delta<\frac14$ gives a contradiction.
\end{proof}

With Claim \ref{claim1}, we may assume without loss of generality that $\rho_2\le 1$. If $ \rho_2=\f{\rho_1}{10} $, then $ \rho_1^2=\al_{\mu}r_2^2\leq 100 $, which again implies \eqref{Mvaboun}. Hence we assume
\be
\rho_2=\dist(0,\pa U')<\f{\rho_1}{10}.\label{rho210rho1}
\ee

\begin{claim}\label{claim2}
Let $ \PP^*=\f{\PP}{|\PP|} $ and $ \Q^*=\f{\Q}{|\Q|} $. For any $ 0<\rho<\f{\rho_1}{10} $,
\be
\begin{aligned}
\|\na(\PP-\PP^*)\|_{L^{\ift}(\Ga_{\rho}^{U'})}&\leq C\rho^{-1}\|(1-|\PP|)\|_{L^{\ift}(U_{5\rho}')}\\
&\quad\quad+C\rho^{-\f{1}{2}}\|\na\PP\|_{L^2(U_{5\rho}')}\|\na\PP\|_{L^{\ift}(U_{5\rho}')}\\
&\quad\quad+C\rho^{\f{1}{4}}\|\na\PP\|_{L^2(U_{5\rho}')}^{\f{1}{2}}\|\na\PP\|_{L^{\ift}(U_{5\rho}')}^{\f{3}{2}}+C\al_{\mu}^{-1}\lda\rho,
\end{aligned}\label{Boundaryuse2}
\ee
and
\be
\begin{aligned}
\|\na\PP^*\|_{L^{\ift}(\Ga_{\rho}^{U'})}&\leq C\al_{\mu}^{-\f{1}{2}}b_{\mu}+C\rho^{-\f{3}{2}}\|\na\PP\|_{L^2(U_{5\rho}')}+C\al_{\mu}^{-1}\lda\rho\\
&\quad\quad+C\rho^{\f{1}{4}}\|\na\PP\|_{L^2(U_{5\rho}')}^{\f{1}{2}}\|\na\PP\|_{L^{\ift}(U_{5\rho}')}^{\f{3}{2}},
\end{aligned}\label{Boundaryuse21}
\ee
where $ C>0 $ depends only on $ M_{U,3},r_{U,3} $, and $ \w_3 $.
\end{claim}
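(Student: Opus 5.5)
The plan is to apply Proposition \ref{boundary1} to the rescaled map $\PP$ on the rescaled domain $U'$, and then get the tangential-plus-normal control of $\PP^*$ from Lemma \ref{LempRe}.

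\textbf{Setting up the rescaled problem.}
First, $\PP$ solves \eqref{EulerLagrange} with parameters $\wt\lda=\al_\mu^{-1}\lda$ and $\wt\mu=\al_\mu^{-1}\mu$. It satisfies $\PP\in\Ss^4$ on $\pa U'\cap B_{\rho_1}$ and $|\PP|\in(1-\delta,1]\subset[\f12,1]$.

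Next, $U'$ is a $C^3$ domain, being a dilation by $\al_\mu^{1/2}$ of a translate of $U$. The dilation only improves the constants: $M_{U',3}\le M_{U,3}$ and $r_{U',3}=\al_\mu^{1/2}r_{U,3}$, as long as $\al_\mu\ge1$, which we may assume since otherwise \eqref{Mvaboun} is immediate. Hence the constant in Proposition \ref{boundary1} is uniform.

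For $0<\rho<\rho_1/10$, the point $0$ is at distance $\rho_2<\rho_1/10$ from $\pa U'$. For each boundary point $z\in\Ga_\rho^{U'}$ the set $U'_{4\rho}(z)\subset U'_{5\rho}$ stays inside the region where $\PP$ is defined, because $5\rho<\rho_1$.

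\textbf{Proof of \eqref{Boundaryuse2}.}
Apply Proposition \ref{boundary1} centered at $z$ with radius $2\rho$, which controls $\na(\PP-\PP^*)$ on $\Ga^{U'}_{\rho}(z)\ni z$. The four terms of \eqref{naQQstar} then become the four terms of \eqref{Boundaryuse2}, with $\lda$ replaced by $\wt\lda=\al_\mu^{-1}\lda$. Enlarging $U'_{2\rho}(z)$ to $U'_{5\rho}$ in all norms and taking the supremum over $z$ gives \eqref{Boundaryuse2}.

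\textbf{Proof of \eqref{Boundaryuse21}.}
Write $\na\PP^*=\na\PP-\na(\PP-\PP^*)$ on $\Ga_\rho^{U'}$, so it suffices to bound $\|\na\PP\|_{L^\infty(U'_{2\rho}(z))}$.

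Apply Lemma \ref{LempRe} to $u=\PP$ with $p=4$, $F=\Delta\PP$ and $g=\PP_b(y):=\Q_b(x_1+\al_\mu^{-1/2}y)$. Scaling gives
$$|D_{\pa U'}g|+\rho|D^2_{\pa U'}g|\le\al_\mu^{-1/2}b_\mu,$$
using $\rho\le\al_\mu^{1/2}r$ and the definition of $b_\mu$. Lemma \ref{LempRe} then yields
$$C\al_\mu^{-1/2}b_\mu+C\rho^{-3/2}\|\na\PP\|_{L^2}+C\rho^{1/4}\|\Delta\PP\|_{L^4}.$$

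The difficulty is $\Delta\PP$, because the term $\wt\mu(|\PP|^2-1)\PP$ is not controlled by the quantities on the right-hand side of \eqref{Boundaryuse21}. I would avoid it as in the proof of Proposition \ref{boundary1}. Apply Lemma \ref{LempRe} not to $\PP$ but to the tangential projection, or more simply observe the following. Since $\PP^*$ takes values in $\Ss^4$, the normal component of $\na\PP$ is $\na|\PP|$. This is controlled by $\na w$ with $w$ as in that proof, where only $-\Delta u\le C(|\na\PP|^2+\wt\lda)$ enters, and the $\wt\mu$ term has a good sign.

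Alternatively, estimate $\na\PP^*$ directly: $\PP^*$ satisfies a harmonic-map-type equation
$$\Delta\PP^*=-|\na\PP^*|^2\PP^*+O\bigl(\wt\lda+|\na\PP|^2+|\na|\PP||\,|\na\PP|\bigr),$$
in which $\wt\mu$ cancels, since the penalty force is parallel to $\PP$. Applying Lemma \ref{LempRe} to $\PP^*$ with boundary data $\PP_b$, and using
$$\|\,|\na\PP|^2\|_{L^4}\le\|\na\PP\|_{L^2}^{1/2}\|\na\PP\|_{L^\infty}^{3/2},$$
gives exactly the terms $\rho^{1/4}\|\na\PP\|_{L^2}^{1/2}\|\na\PP\|_{L^\infty}^{3/2}$ and $\al_\mu^{-1}\lda\rho$. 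The term $\rho^{-3/2}\|\na\PP^*\|_{L^2}$ is bounded by $C\rho^{-3/2}\|\na\PP\|_{L^2}$, because $|\PP|\ge\f12$.

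\textbf{Main obstacle.}
I expect the main step to be verifying this cancellation of $\wt\mu$ in the equation for $\PP^*$, and checking that the resulting forcing is bounded pointwise by $C(|\na\PP|^2+\wt\lda)$. This uses $|\PP|\in[\f12,1]$ and $|\na\PP^*|\le C|\na\PP|$.
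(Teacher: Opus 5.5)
Your proposal is correct and is essentially the paper's argument. For \eqref{Boundaryuse2} you use Proposition \ref{boundary1}; the paper applies it once, in the original scale, at the boundary point $x_2$ nearest to $x_1$ with radius $4\al_\mu^{-1/2}\rho$, and then rescales, which avoids your check of the $C^3$ parameters of $U'$. For \eqref{Boundaryuse21} you use Lemma \ref{LempRe} with $p=4$ applied to $\Q^*$, with exactly the cancellation you identified: $\mathbb{P}_{\Q^*}(\Delta\Q)=\lda\mathbb{P}_{\Q^*}(\f13|\Q|^2\bI-\Q^2)$, hence $|\Delta\Q^*|\le C(|\na\Q|^2+\lda)$. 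Keep this ``alternative'' route and drop your first sketch via the normal component $\na|\PP|$: it does not close on its own, since the tangential part of $\na\PP$ is $|\PP|\na\PP^*$, the very quantity being estimated.
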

\begin{proof}
Fix $\delta\in(0,1/2)$ and set $ s:=\al_{\mu}^{-\f{1}{2}}\rho $. Since $\rho\in(0,\f{\rho_1}{10})$ and $\rho_1=\alpha_\mu^{\f{1}{2}}r_2$, we have $ s\in(0,\f{r_2}{10}) $. From \eqref{rho210rho1} we obtain
$$
\dist(x_1,\pa U)=\al_{\mu}^{-\f{1}{2}}\rho_2<\f{r_2}{10}\leq\f{r}{10}<\f{r_{U,3}}{10}.
$$
If $ 0<s<\dist(x_1,\pa U) $, then $ \pa U\cap B_s=\pa U'\cap B_{\rho}=\emptyset $ and the claim is trivial. Assume $s\ge\dist(x_1,\partial U)$. Choose $x_2\in\partial U$ such that $ |x_1-x_2|=\dist(x_1,\pa U) $. Then
$$
B_s(x_1)\subset B_{2s}(x_2)\quad\text{and}\quad B_{4s}(x_2)\subset B_{5s}(x_1).
$$
Using Lemma \ref{boundary1}, there is $ C=C(M_{U,3},r_{U,3},\w_3)>0 $ such that
\begin{align*}
&\|\na(\Q-\Q^*)\|_{L^{\ift}(\Ga_s^U(x_1))}\leq \|\na(\Q-\Q^*)\|_{L^{\ift}(\Ga_{2s}^U(x_2))}\\
&\quad\quad\leq Cs^{-1}\|(1-|\Q|)\|_{L^{\ift}(U_{5s}(x_1))}+Cs^{-\f{1}{2}}\|\na\Q\|_{L^2(U_{5s}(x_1))}\|\na\Q\|_{L^{\ift}(U_{5s}(x_1))}\\
&\quad\quad\quad\quad+Cs^{\f{1}{4}}\|\na\Q\|_{L^2(U_{5s}(x_1))}^{\f{1}{2}}\|\na\Q\|_{L^{\ift}(U_{5s}(x_1))}^{\f{3}{2}}+C\lda s.
\end{align*}
Changing variables from $\Q$ to $\PP$ yields \eqref{Boundaryuse2}.

A direct computation gives
\be
\Delta\Q^*=\f{\mathbb{P}_{\Q^*}(\Delta\Q)}{|\Q|}+\f{\Q}{|\Q|^3}(|\na|\Q||^2-|\na\Q|^2)+\f{2}{|\Q|^3}(\Q:\pa_k\Q)\pa_k\Q,\label{DeltaQstares}
\ee
where $ \mathbb{P}_{\Q^*} $ is the projection of points in $ \Ss_0 $ to the tangent plane of $ \Ss^{4} $ at $ \Q^* $. It follows from \eqref{EulerLagrange} that
$$
\mathbb{P}_{\Q^*}(\Delta\Q)=\lda\mathbb{P}_{\Q^*}\(\f{1}{3}|\Q|^2\bI-\Q^2\).
$$
Using \eqref{DeltanablaQ} and $|\Q|\in[\f{1}{2},1]$ gives
$$
|\Delta\Q^*|\leq C(|\na\Q|^2+\lda).
$$
This, together with Lemma \ref{LempRe} with $ p=4 $, and the property that $ \Q=\Q^* $ on $ \Ga_{2r}^U $, implies that
\begin{align*}
\|\na\Q^*\|_{L^{\ift}(\Ga_s^U(x_1))}&\leq C\|\na_{\pa U}\Q\|_{L^{\ift}(\Ga_{5s}^U(x_1))}+Cs\|D_{\pa U}^2\Q\|_{L^{\ift}(\Ga_{5s}^U(x_1))}\\
&\quad\quad+Cs^{-\f{3}{2}}\|\na\Q^*\|_{L^2(U_{5s}(x_1))}+Cs^{\f{1}{4}}\|\Delta\Q^*\|_{L^4(U_{5s}(x_1))}\\
&\leq Cb_{\mu}+Cs^{-\f{3}{2}}\|\na\Q\|_{L^2(U_{5s}(x_1))}+C\lda s\\
&\quad\quad+Cs^{\f{1}{4}}\|\na\Q\|_{L^2(U_{5s}(x_1))}^{\f{1}{2}}\|\na\Q\|_{L^{\ift}(U_{5s}(x_1))}^{\f{3}{2}},
\end{align*}
where $ C=C(M_{U,3},r_{U,3},\w_3)>0 $. Scaling from $\Q$ to $\PP$ yields \eqref{Boundaryuse21}.
\end{proof}

Using \eqref{wva4boun}, \eqref{Boundenwva}, \eqref{Boundaryuse2}, and \eqref{Boundaryuse21}, we have
\be
\|\na\PP\|_{L^{\ift}(\Ga_{\rho}^{U'})}\leq C(\rho^{-2}(1-|\Q|)^2+\delta+\rho\delta^{\f{1}{2}}+\al_{\mu}^{-2}\lda^2\rho^2+\al_{\mu}^{-1}b_{\mu}^2+\rho^{-2}\delta),\label{nablaQboundary}
\ee
where $ C_1=C_1(M_{U,3},r_{U,3},\w_3)>0 $. We assume $ \al_{\mu}\geq C_0b_{\mu}^2 $, since otherwise \eqref{Mvaboun} is trivial. Since $\Q\in\mathbb S^4$ on $\partial U\cap B_{2r}$, we have $
|f_{\wt{\lda},\wt{\mu}}(\Q)|\leq C\al_{\mu}^{-1}\lda $. Combining this with \eqref{nablaQboundary} gives
$$
\sup_{\Ga_{\rho}^U}w_{\mu}\leq C_1(C_0^{-1}+\al_{\mu}^{-2}\lda^2\rho^2+\al_{\mu}^{-1}\lda+\rho^{-2}\delta+\delta+\rho\delta^{\f{1}{2}}):=C_1C(\rho)
$$
for any $ 0<\rho<\f{\rho_1}{10} $, where we have used $ |\Q|\in(1-\delta,1] $ and $ \delta\in(0,1) $. Define
$$
\wt{w}_{\mu}=\left\{\begin{aligned}
&\max\left\{C_1C(\rho),w_{\mu}\right\}&\text{ in }&U_{\rho}',\\
&C_1C(\rho)&\text{ in }&B_{\rho}\backslash U'.
\end{aligned}\right.
$$
We have $ \wt{w}_{\mu}\in W^{1,\ift}(B_{\rho}) $. By \eqref{HarBa}, it satisfies
$$
-\Delta \wt{w}_{\mu}\leq C(\wt{w}_{\mu}+\al_{\mu}^{-2}\lda^2)
$$
in $ B_{\rho} $. Recall $ \rho_1=\al_{\mu}^{\f{1}{2}}r_2\leq\al_{\mu}^{\f{1}{2}}r_{U,3} $. We further assume
\be
\al_{\mu}\geq\max\left\{\(\f{1}{5}C_2\lda^4r_{U,3}\)^{\f{2}{7}},\f{4}{25}C_2\lda^2r_{U,3}^2,16C_2\lda\right\},\label{almuchoose}
\ee
since otherwise \eqref{Mvaboun} holds with an appropriately chosen
$C_0=C_0(\lambda,M_{U,3},r_{U,3},\omega_3)>0$. Condition \eqref{almuchoose} implies
$$
\f{1}{2}\leq C_2C(\rho),\quad \f{1}{100}\al_{\mu}^{-2}\lda^2\rho_1^2<\f{1}{16},\text{ and }\al_{\mu}^{-1}\lda<\f{1}{16}.
$$
Hence
$$
C_2\wt{C}(\rho):=C_2(C_0^{-1}+\rho^{-2}\delta+\delta+\rho\delta^{\f{1}{2}})\geq\f{3}{8}
$$
Applying \eqref{Boundenwva}, Lemma \ref{Harnack}, and using $w_\mu(0)=1$, we obtain
$$
1\leq C\(\f{1}{\rho^3}\int_{B_{\rho}}w_{\mu}+\f{\lda^4\rho}{\al_{\mu}^4}\)\leq C_2(C(\rho)+\al_{\mu}^{-4}\lda^4\rho)
$$
for any $0<\rho<\f{\rho_1}{10}$. The second inequality uses the regularity of $\partial U'$ after scaling. Choose $\delta>0$ and $C_0>0$ such that
$$
C_0>16C_2,\quad0<\delta<\f{1}{16},\text{ and }\delta^{\f{1}{3}}+\delta^{\f{5}{6}}<\f{1}{16},
$$
Let $\rho_3=\delta^{1/3}$. Then $ C_2C(\rho_3)<\f{3}{16} $, which implies that $ \f{\rho_1}{10}<\rho_3 $. Therefore
$$
\f{3}{8}\leq C_2\wt{C}\(\f{\rho_1}{20}\)\leq \f{1}{16}+\f{400 C_2}{\rho_1^2}+\f{1}{16}+\f{1}{32}.
$$
Hence 
$$
M_{\mu}\leq 4\rho_1^2\leq C(\lda,M_{U,3},r_{U,3},\w_3),
$$
which proves \eqref{Mvaboun}.

\subsection{Proof of Proposition \ref{LiftQNll1}} Compared with Lemma \ref{corau}, we have the following boundary version. We omit the proof for simplicity.

\begin{lem}\label{boundaryconpa}
Let $ \ga\geq 0 $ and $ U $ be a bounded $ C^1 $ domain with parameters $ M_{U,1},r_{U,1} $, and $ \w_1 $. Assume that $ u\in W^{1,\ift}(U_r(x_0))\cap C^0(U_r(x_0)) $ with $ x_0\in\pa U $ satisfies in the weak sense $ -L\Delta u+c_0u=F $ in $ U_r(x_0) $ and $ u=0 $ on $ \Ga_r^U(x_0) $ for some $ 0<r<r_{U,1} $, where $ L,c_0>0 $ and $ F\in L^{\ift}(U_r(x_0)) $. Then for any $ x\in U_r(x_0) $,
$$
|u(x)|\leq\f{1}{c_0}\|F\|_{L^{\ift}(U_r(x_0))}+C\min\left\{1,\f{L^{\ga}}{c_0^{\ga}(r-|x|)^{2\ga}}\right\}\|u\|_{L^{\ift}(U_r(x_0))},
$$
where $ C>0 $ depends only on $ \ga $.
\end{lem}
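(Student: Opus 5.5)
We prove Lemma \ref{boundaryconpa} with a barrier argument, as in the interior case (Lemma \ref{corau}); the boundary condition $u=0$ on $\Ga_r^U(x_0)$ only helps. We treat $u^+$; the estimate for $u^-$ follows by applying the same argument to $-u$, which solves the equation with $-F$. Set $K:=c_0^{-1}\|F\|_{L^{\ift}(U_r(x_0))}$ and $M:=\|u\|_{L^{\ift}(U_r(x_0))}$.

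\textbf{Step 1 (the bound with the factor $1$).} The function $v:=u-K$ satisfies $-L\Delta v+c_0v=F-c_0K\le 0$ weakly. It also satisfies $v\le u$ everywhere and $v\le 0$ on $\Ga_r^U(x_0)$. The weak maximum principle on $U_r(x_0)$, with boundary data on $\pa B_r(x_0)\cap U$ bounded by $M$, then gives $u\le K+M$. In the same spirit, whenever $\phi$ is a supersolution ($-L\Delta\phi+c_0\phi\ge 0$) with $\phi\ge M$ on $\pa B_r(x_0)\cap\ol U$ and $\phi\ge0$ on $\Ga_r^U(x_0)$, we get $u\le K+\phi$ in $U_r(x_0)$. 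To justify this, test the inequality for $u-K-\phi$ against $(u-K-\phi)^+$. This test function vanishes on all of $\pa(U_r(x_0))$ because $u=0\le K+\phi$ on the flat part of the boundary. Since $u\in W^{1,\ift}\cap C^0$, the test is admissible.

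\textbf{Step 2 (decay away from $\pa B_r$).} We use the radial barrier from \cite[Lemma 6]{NZ13}. Let $k:=\sqrt{c_0/L}$ and let $\phi$ be the radial solution of $-L\Delta\phi+c_0\phi=0$ in $B_r(x_0)$ with $\phi=M$ on $\pa B_r(x_0)$, namely $\phi(x)=M\,\f{r\sinh(k|x-x_0|)}{|x-x_0|\sinh(kr)}$. This $\phi$ is positive, so it is admissible in Step 1. An elementary estimate gives
$$
\phi(x)\le CM\min\{1,e^{-k(r-|x-x_0|)/2}\}\qquad\text{for }|x-x_0|\ge r/2.
$$
For $|x-x_0|<r/2$ the bound $\phi(x)\le CM e^{-kr/4}$ holds as well. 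Since $e^{-t}\le C_\ga t^{-2\ga}$ for $t>0$, with $t=k(r-|x-x_0|)/2$ we obtain $\phi(x)\le C_\ga M\min\{1,L^\ga c_0^{-\ga}(r-|x-x_0|)^{-2\ga}\}$. Combining this with Step 1 yields the stated inequality, where $|x|$ is read as $|x-x_0|$ after translating $x_0=0$.

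\textbf{Main obstacle.} The only delicate point is justifying the comparison principle for weak $W^{1,\ift}$ solutions on the Lipschitz region $U_r(x_0)$. The truncated test function must lie in $H^1_0(U_r(x_0))$, which requires continuity of $u$ up to $\Ga_r^U(x_0)$. We read the hypothesis $u=0$ there as a trace condition, which suffices because $u\in W^{1,\ift}$. The $C^1$ regularity of $U$ enters only here, and the constants do not depend on $M_{U,1}$ or $\w_1$, consistent with the statement.
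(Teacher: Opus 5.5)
Your barrier-plus-comparison argument is correct and follows the route the paper intends. The paper omits this proof, presenting the statement as the boundary analogue of Lemma \ref{corau}, which it obtains from the barrier of \cite[Lemma 6]{NZ13} and the maximum principle; your comparison with the explicit radial solution $M r\sinh(k|x-x_0|)/(|x-x_0|\sinh(kr))$, using that the zero data on $\Ga_r^U(x_0)$ only helps, is exactly that. The one slip is cosmetic: for $|x-x_0|<r/2$ your bound $e^{-kr/4}$ is not dominated by $e^{-k(r-|x-x_0|)/2}$, so either take $t=k(r-|x-x_0|)/4$ there or use the sharper bound $\phi\le M/\cosh(kr/2)\le 2Me^{-kr/2}$, which only changes $C_\ga$.
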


\begin{proof}[Proof of Proposition \ref{LiftQNll1}]
Let $ u:=(1-|\Q|^2)^2 $. Using \eqref{EulerLagrange} and direct computation, we obtain
\begin{align*}
-\Delta u&=-2\mu|\Q|^2u-2|\na(|\Q|^2)|^2+4|\na\Q|^2(1-|\Q|^2)\\
&\quad\quad+2\lda\(\f{1}{\sqrt{6}}|\Q|^4-\tr(\Q^3)\)(1-|\Q|^2).
\end{align*}
Using the Cauchy–Schwarz inequality and the assumption $ |\Q|\geq \frac{1}{2} $ in $ U_{2r}(x_0) $, we obtain
\begin{align*}
\mu^{-1}\Delta u&\geq 2|\Q|^2u-4\mu^{-1}(|\na\Q|^2+C\lda)(1-|\Q|^2)\\
&\geq\f{1}{4}u-C(\lda\mu^{-1}+\mu^{-2}(\|\na\Q\|_{L^{\ift}(U_{2r}(x_0))}^4+\lda^2)).
\end{align*}
Since $ \Q\in\Ss^4 $ on $ \pa U\cap B_{2r}(x_0) $, we have $ u=0 $ on $ \Ga_{2r}^U(x_0) $. Applying Lemma \ref{boundaryconpa} with $ L=\mu^{-1} $, $ c_0=\f{1}{2} $, and $ \ga=2 $, we obtain
$$
\|u\|_{L^{\ift}(U_r(x_0))}\leq C\mu^{-2}(\|\na\Q\|_{L^{\ift}(U_{2r}(x_0))}^4+\lda^2+r^{-4}).
$$
Consequently, \eqref{EulerLagrange} implies
\begin{align*}
\|\Delta\Q\|_{L^{\ift}(U_r(x_0))}\leq C\lda+C\mu\|\sqrt{u}\|_{L^{\ift}(U_{2r}(x_0))}\leq C(\|\na\Q\|_{L^{\ift}(U_{2r}(x_0))}^2+\lda+r^{-2}),
\end{align*}
which completes the proof.
\end{proof}
\section{Proof of Theorem \ref{main1}}

As in Remark \ref{Qldasmooth}, we only prove the second and third properties. By Remark \ref{boundedenergy} and \cite[Theorem 1.2]{DMP21}, we have 
$$
\|\Q\|_{L^{\ift}(\om)}\leq 1\quad\text{and}\quad\cF_{\lda,\mu}(\Q_{\lda,\mu})\leq C(\Q_b,\om).
$$
Therefore, the monotonicity formula in Proposition \ref{Mono} applies to $ \Q_{\lda,\mu} $. By \cite[Theorem 1.2]{DMP21}, the limiting map $ \Q_{\lda} $ is smooth for each fixed $ \lda>0 $. Hence there exists $ r_0>0 $ such that for all $ 0<r<2r_0 $,
$$
\f{1}{r}\int_{\om\cap B_r(x)}\(\f{1}{2}|\na\Q_{\lda}|^2+\lda W(\Q_{\lda})\)<\delta,
$$
where $ \delta\in(0,1) $ will be chosen later. Up to a subsequence, $ \Q_{\lda,\mu}\to\Q_{\lda} $ strongly in $ H^1(\om,\Ss_0) $. Then for sufficiently large $ \mu>0 $
$$
\f{1}{r_0}\int_{\om\cap B_{r_0}(x)}e_{\lda,\mu}(\Q_{\lda,\mu})<2\delta.
$$
for any $ x\in\ol{\om} $. By Propositions \ref{InteriorRegularity} and \ref{smallreg2}, we can choose $ \delta>0 $ sufficiently small so that
$$
\|e_{\lda,\mu}(\Q_{\lda,\mu})\|_{L^{\ift}(\om)}\leq C(\om,\Q_b).
$$
This implies the second property of Theorem \ref{main1}. Additionally, for sufficiently large $ \mu>0 $, we have $ |\Q|\geq\f{1}{2} $ in $ \om $. Now, the $ C_{\loc}^{\ift} $ convergence follows from Proposition \ref{Ckestimate} and the $ C^{1,\al} $ $ (\al\in(0,1)) $ convergence is the consequence of Proposition \ref{LiftQNll1}.

\section*{Acknowledgments} 

The authors are grateful to Professor Adriano Pisante for helpful discussions that inspired the initial idea of this work. This work was partially supported by the National Key R$\&$D Program of China under Grant 2023YFA1008801 and NSF of China under Grant 12288101.

\section*{Declarations} 

\subsection*{Data availability} This article has no associated data.
\subsection*{Conflict of interest} The authors declare that they have no conflict of interest.

\bibliographystyle{plain}
\bibliography{ConvLyuksyutovRegime}

\end{document}